\documentclass[10pt]{amsart}

\usepackage{graphicx}
\usepackage{verbatim}
\usepackage[latin1]{inputenc}
\usepackage[T1]{fontenc}
\usepackage{amsthm}
\usepackage{amsmath}
\usepackage{amssymb}
\usepackage{relsize}
\usepackage[pagebackref,colorlinks=true,linkcolor=blue,urlcolor=blue,citecolor=blue]{hyperref}

\usepackage [all] {xy}
\usepackage{mathrsfs}

%\frenchspacing
\setlength{\textwidth}{12.5cm}
\setlength{\textheight}{18.5cm}
\setlength{\parindent}{20pt}

\newtheorem{theorem}{Theorem}[section]

\newtheorem{lemma}[theorem]{Lemma}
\newtheorem{proposition}[theorem]{Proposition}
\newtheorem{definition}[theorem]{Definition}

\theoremstyle{definition}
\newtheorem{remark}[theorem]{Remark}

\newtheorem{construction}[theorem]{Construction}
\newtheorem{notation}[theorem]{Notation}

\numberwithin{equation}{section}

%Varie

%textbf

\newcommand{\cat}{\textbf{C}}
\newcommand{\cont}{\textbf{Contr}}

%text

\newcommand{\mor}{\text{Mor}}

\newcommand{\homom}{\operatorname{Hom}}
\newcommand{\id}{\text{Id}}

%ideals and lie algebras

%diagrammi
\newcommand{\semicosimplicial}[3]{\[\xymatrix {#1 \ar@<-.3ex>[r] \ar@<.4ex>[r] & #2 \ar@<-.7ex>[r] \ar@<.7ex>[r]\ar[r] & #3 \ar@<-1ex>[r] \ar@<.4ex>[r]\ar@<1.1ex>[r] \ar@<-.3ex>[r] & \dots}\]}
\newcommand{\namesemicosimplicial}[4]{\[\xymatrix {#1: &#2 \ar@<-.3ex>[r] \ar@<.4ex>[r] & #3 \ar@<-.7ex>[r] \ar@<.7ex>[r]\ar[r] & #4 \ar@<-1ex>[r] \ar@<.4ex>[r]\ar@<1.1ex>[r] \ar@<-.3ex>[r] & \dots}\]}
\newcommand{\namesemicosimplicialbis}[3]{\[\xymatrix {#1: &#2 \ar@<-.3ex>[r] \ar@<.4ex>[r] & #3 \ar@<-.7ex>[r] \ar@<.7ex>[r]\ar[r] & \dots}\]}
\newcommand{\semicosimplicialbis}[2]{\[\xymatrix{\text{Hilb}^Z_X(A) = \text{Tot}( #1 \ar<-.3ex>[r] \ar@<.4ex>[r] & #2 \ar@<-.7ex>[r] \ar@<.7ex>[r]\ar[r] & \dots )}\]}
\newcommand{\contr}[5]{\xymatrix{#1 \ar@<.5ex>[r]^{#3}&#2\ar@(ul,ur)^{#4}\ar@<.5ex>[l]^{#5}}}

%campi
\newcommand{\fK}{\mathbb K}

%fasci in alternativa mathscr

\newcommand{\sC}{\mathcal{C}}

\author{Luigi Lunardon}
\title{Some remarks on Dupont contraction}

\begin{document}
\begin{abstract}
We present an alternative equivalent description of
Dupont's simplicial contraction: it is an explicit example of a simplicial contraction between the simplicial differential graded algebra of polynomial differential forms on  standard simplices and the space of Whitney elementary forms.  
\end{abstract}

\maketitle

\section{Introduction}
In \cite{dup} Dupont gave an explicit description of  simplicial contraction from the simplicial differential graded algebra $\Omega_{\bullet}$ of polynomial differential forms on the affine standard simplices to the subspace of Whitney elementary forms, a simplicial finite dimensional differential graded vector subspace of the former.

More precisely, the Dupont contraction is a morphism of simplicial abelian groups $h\colon \Omega_\bullet\to \Omega_\bullet$ such that $dh+hd=r-\id$, where $r$ is the classical Whitney's  retraction of 
$\Omega_\bullet$ onto the subspace of elementary forms: in this paper we recall 
the general notion of contraction  in Section~\ref{c1} and we describe the simplicial map  $h$ in Section~\ref{c11}.

Although Dupont  contraction can be used to give alternative proofs of  some classical results, such as the polynomial De Rham's theorem (\cite[Theorem 2.2]{bg},  \cite{dup2} and \cite[Theorem 10.15]{rht}), their most relevant use is given in combination with homological perturbation theory \cite{hk} and homotopy transfer of $\infty$-structures (see e.g. \cite{bm,f,ks}).
For instance, Dupont's contraction was used in \cite{cg} to construct a \emph{canonical} $C_\infty$ structure on the 
normalized cochain complex of a cosimplicial commutative algebra over a field of characteristic 0. 
Similarly,  in \cite{fmm} the authors used it to induce a \emph{canonical} $L_\infty$ structure on the normalized cochain complex of a cosimplicial differential graded Lie algebra: in the same paper some explicit computation is done in the particular case of cosimplicial Lie algebra, while the particular case of a single morphism of differential graded Lie algebras (considered as a cosimplicial object via Kan extension) was previously considered and deeply investigated in \cite{fm}. It is also worth to mention the application of Dupont's contraction to Hodge theory of complex algebraic varieties \cite{navarro}.

Dupont's Theorem and the homotopy transfer theorem are also key tools in \cite{getzler} and \cite{b}. In these two papers the authors study the Deligne $\infty$-groupoid associated to an $L_\infty$-algebra, its relation with the Maurer-Cartan elements of that algebra and the behaviour of the Deligne $\infty$-groupoid under totalization and homotopy limits. In particular Dupont's contraction is used to construct a Kan complex that is quasi-isomorphic to the simplicial set of Maurer-Cartan elements.

The original construction by Dupont provides a family of maps which is really hard to compute. 
An apparently different simplicial contraction $k\colon \Omega_\bullet\to \Omega_\bullet$
with the same key properties of Dupont's contraction, but 
somewhat easier to handle, was proposed by  M. Manetti  during a cycle of seminars on deformation theory given at Roma in 2011, 
leaving  unsettled the question whether $k=h$. 

The main result of this paper is to give a positive answer to the above question, and then 
to give an alternative equivalent definition of Dupont's contraction.
In Section \ref{c2} we describe the map  $k$ and we reproduce Manetti's (unpublished) proof that 
it is indeed a simplicial object in the category of contractions. Finally, in Section~\ref{c3} we prove the equality 
$k=h$.

\subsection*{Acknowledgement}
I would like to thanks Prof. M. Manetti for his help during the (slow) preparation of this paper.
This work was supported by the Engineering and Physical Sciences Research Council [EP/L015234/1]. The EPSRC Centre for Doctoral Training in Geometry and Number Theory (The London School of Geometry and Number Theory), University College London.

\section{Simplicial contraction}\label{c1}
In this section we describe the category of contractions of DG-vector spaces and we recall the definition of simplicial and cosimplicial objects in any given category.

Let $\fK$ be a field of characteristic $0$, a DG-vector space over $\fK$ is a graded vector space endowed with a linear map $d$ of degree $1$ such that $d^2 = 0$.
\begin{definition}\label{con}
A contraction of DG-vector spaces is a diagram 
\[\contr{M}{N}{i}{h}{\pi},\]
with $M$ and $N$ two DG-vector spaces over $\fK$, $h\in\homom_\fK^{-1}(N, N)$ and $i,\;\pi$ two morphisms of DG-vector spaces. Moreover, we require the following relations:
\[\pi i = \id_M, \;\quad\quad i\pi - \id_N = d_Nh + hd_N.\]
\end{definition}
\begin{remark}
The maps $\pi$ and $i$ are respectively injective and surjective, since $\pi i = \id_M$. Moreover, it follows from the relation $ i\pi - \id_N = d_Nh + hd_N$ that they are both quasi-isomorphisms.
\end{remark}
\begin{remark}\label{g}
Suppose also that the additional conditions $h^2 = \pi h = 0$ hold. From the identity $ i\pi - \id_N = d_Nh + hd_N$ we obtain the identities:
\[ - h = hd_N h;\quad\quad hi\pi - h = hd_N h.\]
It follows that $h i \pi = 0$ and since $\pi$ is surjective, then $h i = 0.$ Similarly the conditions $hi = h^2 = 0$ imply $\pi h = 0.$ The conditions $h^2 = \pi h = hi = 0$ are called side conditions.
\end{remark}

\begin{definition}\label{morcon}
A morphism of contractions is a commutative diagram
\[\xymatrix{N\ar@(ul,ur)^{h}\ar@<.5ex>[d]^{\pi}\ar[r]^f & B\ar@(ul,ur)^{k}\ar@<.5ex>[d]^{p}\\
M\ar@<.5ex>[u]^{i}\ar[r]^{\widehat f}&A\ar@<.5ex>[u]^{j}}\]
where $f\colon N\to B$ is a morphism of DG-vector spaces such that $fh = kf.$ We denote by $\widehat f \colon M\to A$ the map $\widehat f = pfi$. 
\end{definition}
\begin{remark}
This definition of morphism doesn't seem natural. However we get the following identities:
\[\begin{split}j\widehat f &= jpfi = (\id_B + d_Bk + kd_B)fi = fi + f(d_Nh + hd_N)i \\
& = fi + f(i\pi - \id_N)i = fi,\\
\widehat f \pi &= pfi\pi = pf + pf(d_N h + h d_N) = pf + p(d_B k + k d_B)f  \\
&=pf + p(jp - \id_B)f = pf.\end{split}\]
Using these these two identities it follows that the following diagrams commute:
\[\xymatrix{M\ar[r]^{\widehat f}\ar[d]^i &A\ar[d]^j\\N\ar[r]^f&B}\quad\quad\quad\quad \xymatrix{N\ar[r]^{f}\ar[d]^\pi &B\ar[d]^p\\M\ar[r]^{\widehat f}&A}\]
As a consequence our notion of morphism of contractions is compatible with a couple of morphism $f\colon N\to B$ and $g\colon M\to A$ commuting with every square.
\end{remark}
The category of contractions of DG-vector spaces over $\fK$ is denoted by $\cont$.
\begin{definition}
We denote with $\Delta$ the category of finite ordinals. The objects of this category are the finite ordered sets $[n] = \{0 < \dots < n\}$ and its morphisms are the non decreasing maps. 
A special role in this category is played by face maps, which are defined as:
\[\delta_k\colon [n - 1] \to [n];\quad\quad \delta_k(x) = \left\{\begin{array}{lcc} x\quad\quad\quad\quad\text{if $p < k$}\\x + 1\quad\quad\text{ if $p\geq k$}\end{array}\right .,\quad k = 0, \dots, n. \]
\end{definition}
\begin{notation}
We denote with $I(n, m) \subset \mor_\Delta([n], [m])$ the subset of injective, and hence strictly monotone, maps.
\end{notation}

%In the following $f\colon[n] \to [m]$ will always be a morphism in $\Delta$.

\begin{definition}
A cosimplicial object in a category $\cat$  is a functor $F\colon \Delta\to \cat$; a simplicial object in $\cat$ is a functor $F\colon \Delta^{op}\to \cat.$ 
\end{definition}

Dupont (\cite{dup}, Chapter 2) proposed an explicit construction of a simplicial object in $\cont$.

\begin{remark}
The notion of contraction has a few slight variants in literature. In this paper we follow \cite{cg} and \cite{getzler}.
The original definition given by Eilenberg and Mac Lane in \cite{eml} requires also the side conditions $hi = \pi h = 0$. 
In \cite{ls} the object described in Definition \ref{con} is called a \emph{strong deformation data}, and to be a contraction the condition $h^2 = 0$ is required. 

The conditions $hi = \pi h = h^2 = 0$ are almost granted: given $i ,\pi$ and $h$ as in Definition \ref{con}, then we can replace $h$ with $h_1 = (dh + h d) h(dh + h d)$; we still have a contraction, but now this contraction satisfies $h_1 i = \pi h_1 = 0.$ Replacing $h_1$ with $h_2 = -h_1dh_1$ it is again a contraction and now it satisfies $\pi h_2 = h_2i =h_2^2 = 0$

\end{remark}

%%%%%%%%%%%%%%%%%%%%%%%%%%%%%

%%%%%%%%%%%%%%%%%%%%%%%%%%%%%

%%%%%%%%%%%%%%%%%%%%%%%%%%%%%

%%%%%%%%%%%%%%%%%%%%%%%%%%%%%

%%%%%%%%%%%%%%%%%%%%%%%%%%%%%

\section{Dupont's simplicial contraction}\label{c11}
In this section we describe the simplicial contraction which Dupont suggested in \cite{dup}. The proof that it is actually a contraction is in Section \ref{c2} and Section \ref{c3}. 

\begin{definition}
The affine standard $n$-simplex on $\fK$ is the set:
\[\Delta^n_\fK\ = \{(x_0, \dots, x_n)\in \fK^{n + 1} \text{ such that } x_0 + \dots + x_n = 1\}.\]
The vertices of $\Delta_{\fK}^n$ are the points $e_i\in\Delta_\fK^n$:
\[e_0 =  (1,0,\dots, 0), \quad e_1 = (0, 1, 0, \dots, 0), \quad\dots\quad,\quad e_n = (0,\dots, 0, 1).\]
\end{definition}
The \emph{cosimplicial affine space $\Delta^\bullet_{\fK}$} is the functor which associate to each set $[n]$ the affine standard $n$-simplex $\Delta^n_\fK$ and to each non decreasing map $f\colon[n]\to[m]$ the affine map 
\[f\colon \Delta_\fK^n\to\Delta_\fK^m, \quad\quad\quad f(e_i) = e_{f(i)}.\]
\begin{definition}
The DG-vector space of polynomial differential forms on the affine standard $n$-simplex is:
\[\Omega_n = \bigoplus\limits_{p=0}^n \Omega_n^p= \frac{\fK[x_0, \dots, x_n, dx_0, \dots, dx_n]}{\left(\sum\limits_{k=0}^n x_i - 1, \;\sum\limits_{k=0}^n dx_i\right)}.\]
Here $\Omega_n^p$ denotes the subspace of $p$-forms, which are the elements of degree $p$.
\end{definition}
The simplicial DG-vector space $\Omega_\bullet$ associates to each $[n]$ the DG-vector space $\Omega_n$ and to each map $f\colon[n]\to[m]$ in $\Delta$ the pull-back $f^\ast\colon\Omega_m\to \Omega_n$, induced by the affine map $f\colon \Delta^n_\fK \to \Delta^m_\fK$. 

Next we define a finite dimensional vector subspace $\sC_n\subset\Omega_n,$ called \emph{the space of Whitney elementary forms}. As a consequence of Proposition \ref{wff} it follows that $\sC_n$ is closed under derivation and thus it is a DG-vector subspace of $\Omega_n$.
\begin{definition}[Whitney, \cite{whit}]\label{wef}
Fix $f\colon [m] \to [n]$ a morphism in $\Delta$. The Whitney elementary form associated to $f$ is the $m$-form:
\[\omega_f = m! \sum\limits_{i = 0}^{m} (-1)^i x_{f(i)} dx_{f(0)}\wedge \dots\wedge \widehat{dx_{f(i)}}\wedge \dots\wedge dx_{f(m)} \in \Omega_n^m.\]
We denote with $\sC_n$ the vector space spanned by Whitney elementary forms.
\end{definition}

\begin{proposition}\label{wff}
Let $f\colon [n]\to [m]$ a morphism in $\Delta.$ The followings hold:
\begin{enumerate}
\item If $f$ is injective $ f^\ast\omega_f = n! dx_1\wedge \dots \wedge dx_n;$ otherwise $\omega_f = 0$;
\item If $f$ is injective for every $g\colon [p] \to [m]$ we have $g^\ast\omega_f = \sum\limits_{\{h\colon[n]\to[p], \; gh = f\}}\omega_h;$
\item $d\omega_f = \sum\limits_k (- 1)^k \sum\limits_{\{g\colon[n + 1] \to [m], \;g\delta_k = f\}}\omega_g.$
\end{enumerate}
In particular $\sC_n$ is a DG-vector subspace of $\Omega_n$ and $\sC_\bullet$ is a simplicial DG-vector subspace of $\Omega_\bullet.$
\end{proposition}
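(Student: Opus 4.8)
The plan is to verify the three formulas by direct computation with the explicit definition of $\omega_f$, handling injective $f$ first and then reducing the general case to it. For part (1), if $f$ is injective with $f(i)=j_i$, then $f^\ast x_k = x_{j_i}$ when $k=j_i$ and $f^\ast x_k$ is a linear combination of the $x_{j_0},\dots,x_{j_n}$ otherwise (in fact $f^\ast$ sends $x_k$ to the sum over $i$ with $j_i = k$, which is either $x_k$ itself or $0$ in the coordinates $x_{j_0},\dots,x_{j_n}$ of $\Delta^n$); after pulling back and reindexing so that $\Delta^n$ has coordinates $y_0,\dots,y_n$, one gets $m!$ times the standard simplicial volume form, which equals $n! \, dx_1\wedge\cdots\wedge dx_n$ after using the relation $\sum dx_i = 0$ to eliminate $dx_0$. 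If $f$ is not injective, then $f=g\delta$ for a surjection followed by an injection, or more directly two of the indices $f(0),\dots,f(m)$ coincide; when two indices $f(a)=f(b)$ coincide, the antisymmetrization built into $\omega_f$ forces it to vanish — this is the standard argument that a determinant with a repeated column is zero, applied after noting that the sum $\sum_i (-1)^i x_{f(i)} dx_{f(0)}\wedge\cdots\widehat{dx_{f(i)}}\cdots$ is, up to normalization, the expansion along a row of a determinant whose rows are indexed by $0,\dots,m$ and whose entries repeat.

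For part (2), I would write $g^\ast\omega_f$ using $g^\ast(x_k dx_{k_1}\wedge\cdots) = (g^\ast x_k)(g^\ast dx_{k_1})\wedge\cdots$ and expand. The key observation is that $g^\ast x_{f(i)} = \sum_{\{j \,:\, g(j)=f(i)\}} x_j$, and similarly for the $dx$'s; expanding the resulting product of sums and collecting terms, each monomial $x_{h(i)} dx_{h(0)}\wedge\cdots\widehat{dx_{h(i)}}\cdots\wedge dx_{h(m)}$ that survives corresponds exactly to a choice of $h\colon[m]\to[p]$ with $gh=f$, and it survives (rather than being killed by antisymmetry) precisely when $h$ is injective, hence strictly monotone — but since $f=gh$ is injective, $h$ is automatically injective. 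Summing the signs correctly reconstructs $\omega_h$ for each such $h$. One must be a little careful that non-monotone injective choices, after reordering the wedge factors, either reproduce a monotone $h$ with the correct sign or cancel; the cleanest bookkeeping is to note that $\omega_h$ itself is defined for any $h$ (not just monotone ones) by the same formula and is alternating in an appropriate sense, so the sum over all injective $h$ with $gh=f$ agrees with the sum over strictly monotone ones.

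For part (3), I would compute $d\omega_f$ directly: since $d(x_{f(i)}) = dx_{f(i)}$ and $d$ of a wedge of $dx$'s is zero, applying $d$ to $\omega_f = m!\sum_i (-1)^i x_{f(i)} dx_{f(0)}\wedge\cdots\widehat{dx_{f(i)}}\cdots\wedge dx_{f(m)}$ yields $m!\sum_i (-1)^i dx_{f(i)}\wedge dx_{f(0)}\wedge\cdots\widehat{dx_{f(i)}}\cdots = m!\sum_i dx_{f(0)}\wedge\cdots\wedge dx_{f(m)} = (m+1)!\, dx_{f(0)}\wedge\cdots\wedge dx_{f(m)}$. On the other side, for an injective $f\colon[m]\to[\ell]$, the forms $\omega_g$ for $g\colon[m+1]\to[\ell]$ with $g\delta_k=f$ are exactly those obtained by inserting a new value $j$ into the image of $f$ at position $k$; one checks that the alternating sum $\sum_k(-1)^k\sum_{g\delta_k=f}\omega_g$ telescopes: each $\omega_g$ contributes $(m+1)!$ times an alternating sum of wedges of $dx$'s, and all the "interior" cancellations leave precisely $(m+1)!\,dx_{f(0)}\wedge\cdots\wedge dx_{f(m)}$. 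For non-injective $f$ both sides vanish by part (1) together with the antisymmetry argument.

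I expect the main obstacle to be the sign and indexing bookkeeping in part (2), where one expands a product of sums of coordinates and must match each surviving term with a unique factorization $f=gh$ and check that the accumulated sign equals the sign appearing in the definition of $\omega_h$; the conceptual content is small but the combinatorics of reordering wedge factors into monotone order is where errors creep in. The final sentence — that $\sC_n$ is a DG-subspace and $\sC_\bullet$ a simplicial DG-subspace — is then immediate: part (3) shows $d\omega_f\in\sC_n$, so $\sC_n$ is closed under $d$; parts (1) and (2) show that for any $g$ the pullback $g^\ast$ sends each elementary form $\omega_f$ to a $\fK$-linear combination of elementary forms, so $g^\ast(\sC_m)\subseteq\sC_n$, and compatibility with composition and identities is inherited from $\Omega_\bullet$.
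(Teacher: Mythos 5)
Your overall strategy --- direct computation from the explicit formula for $\omega_f$, antisymmetry in the indices for the non-injective case, and the pullback formula $g^\ast x_{f(i)}=\sum_{\{j:\,g(j)=f(i)\}}x_j$ for part (2) --- is exactly the paper's, and parts (1) and (2) are essentially complete. (Your worry about non-monotone choices in (2) is moot: since $g$ is non-decreasing and $f$ is strictly increasing, any tuple $(j_0,\dots,j_n)$ with $g(j_i)=f(i)$ is automatically strictly increasing, so every surviving term already defines a morphism in $\Delta$.)

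The genuine gap is the right-hand side of (3). The sum $\sum_k(-1)^k\sum_{\{g:\,g\delta_k=f\}}\omega_g$ does not telescope, and the individual $\omega_g$ are not ``alternating sums of wedges of $dx$'s'' --- each term carries a polynomial coefficient $x_{g(i)}$. The actual mechanism is different: a $g$ with $g\delta_k=f$ and $\omega_g\neq0$ inserts a new value $i$ with $f(k-1)<i<f(k)$ at position $k$; antisymmetry gives $\omega_{f(0),\dots,f(k-1),i,f(k),\dots,f(n)}=(-1)^k\omega_{i,f(0),\dots,f(n)}$, so the sign $(-1)^k$ is absorbed and the double sum becomes the \emph{unsigned} sum $\sum_{i\notin\mathrm{Im}(f)}\omega_{i,f(0),\dots,f(n)}$, which may be extended over all $i\in[m]$ since the added terms vanish. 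The identity you then need,
\[
\sum_{i=0}^m\omega_{i,f(0),\dots,f(n)}=(n+1)!\,dx_{f(0)}\wedge\dots\wedge dx_{f(n)},
\]
is not a cancellation among wedge monomials but a consequence of the affine relations $\sum_i x_i=1$ and $\sum_i dx_i=0$: writing $\omega_{i,i_0,\dots,i_n}=(n+1)!\,x_i\,dx_{i_0}\wedge\dots\wedge dx_{i_n}-(n+1)\,dx_i\wedge\omega_{i_0,\dots,i_n}$ and summing over $i$ kills the second block and collapses the first. You invoke these relations in part (1) but not here, and without them the claimed equality fails in the polynomial ring before passing to the quotient defining $\Omega_m$. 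With this identity supplied, your argument closes and coincides with the paper's proof.
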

\begin{proof}
Denote by $\omega_{i_0,\dots, i_n}$ the differential form:
\[\omega_{i_0,\dots, i_n} = n!\sum\limits_{k = 0}^n (-1)^k x_{i_k} dx_{i_0}\wedge\dots\wedge\widehat{dx_{ik}}\wedge\dots\wedge dx_{i_n}\in\Omega_m^n.\]

\begin{enumerate}
\item Since $\omega_{i_0,\dots, i_n}$ is alternating on indices, then if $f$ is not injective it follows $\omega_f = 0.$ 

Suppose $f$ injective; since we are working on the affine standard simplex we have:
\[\begin{split}f^\ast\omega_f &= n!\sum\limits_{k = 0}^n (-1)^kx_kdx_0\wedge\dots\wedge\widehat{dx_k}\wedge\dots\wedge dx_n \\
&= n!\left(x_0dx_1\wedge\dots\wedge dx_n - \sum\limits_{k = 1}^n (-1)^{2k - 1} x_k dx_1\wedge\dots\wedge dx_n\right ) \\
&= n!\,(x_0+ \dots + x_n) dx_1\wedge\dots\wedge dx_n = n!dx_1\wedge\dots\wedge dx_n.\end{split}\]
% of the proof since $n!\mathlarger\int\limits_{\Delta_\fK^n} dt_1\wedge\dots\wedge dt_n = 1.$

\item Consider the family of sets $P_i = \{ j\in[p]| \;g(j) = f(i)\}$ and note that:
\begin{enumerate}
\item since $f$ is injective $P_i\cap P_j = \emptyset$ if $i \neq j$;
\item $g^\ast(x_{f(i)}) = \sum_{j\in P_i} x_j,$ and $g^\ast(dx_{f(i)}) = \sum_{j\in P_i} dx_j$;
\item the functions $h\colon[n]\to[p]$ such that $gh = f$ are in bijection with $P_0\times\dots\times P_n$.
\end{enumerate}

\item To prove the last point first we show that 
\[d\omega_{i_0,\dots, i_n} = (n + 1)! dx_{i_0} \wedge\dots\wedge dx_{i_n} = \sum_{i = 0}^m \omega_{i,i_0,\dots, i_n}.\]
Indeed we have:
\[d\omega_{i_0,\dots, i_n} = n!\sum\limits_{k = 0}^n dx_{i_0}\wedge\dots \wedge dx_{i_k}\wedge\dots\wedge dx_{i_n}=(n + 1)! dx_{i_0} \wedge\dots\wedge dx_{i_n},\]
and for the second equality:
\[\begin{split}\sum\limits_{i = 0}^m\omega_{i,i_0,\dots, i_n} &= (n + 1)!\sum\limits_{i = 0}^m x_idx_{i_0}\wedge\dots\wedge dx_{i_n} - (n + 1)\sum\limits_{i = 0}^m dx_i\wedge\omega_{i_0, \dots, i_n}\\
&= (n + 1)! dx_{i_0} \wedge\dots\wedge dx_{i_n}.\end{split}\]
Taking $f\colon[n]\to[m]$ we can finally see that:
\[\begin{split} d\omega_f = \sum\limits_{i = 0}^m \omega_{i, f(0), \dots, f(n)} &= \sum\limits_{k = 0}^n (- 1)^k \sum\limits_{f(k - 1) < i< f(k)} \omega_{f(0), \dots, f(k -1), i, f(k), \dots, f(n)}\\
&= \sum\limits_{k = 0}^n (-1)^k \sum\limits_{\{g|g\delta_k = f\}}\omega_g.
\end{split}\]
\end{enumerate}
In particular it follows from (1) that $\sC_n$ is a finite dimensional DG-vector space for all $n$, while (2) and (3) imply that $\sC_\bullet$ is a simplicial DG-vector space.
\end{proof}

From Proposition \ref{wff} we obtain, for all $n \geq 0$, the inclusion of DG-vector spaces:
\[\sC_n\xrightarrow{i_n} \Omega_n.\]
We want to extend these inclusions to contractions. 
%Whitney in \cite{whit} defined a family of maps $\pi_m\colon \Omega_m \to \sC_m,$ while Dupont in \cite{dup} defined the family of maps $h_m\in %\homom^{-1}(\Omega_m, \Omega_m)$ such that:
%\[\contr{\sC_\bullet}{\Omega_\bullet}{i_\bullet}{h_\bullet}{\pi_\bullet}.\]
This means that we want to introduce two families of maps, $\pi_m$ and $h_m$ such that $h_m, \pi_m$ and $i_m$ satisfy the conditions of Definition \ref{con}. Moreover we want this construction to be simplicial.

To define these maps we use integration on affine standard simplices. An axiomatic definition of integration of polynomial differential forms on affine standard simplices and a more detailed discussion on its properties is given in Chapter 10 of \cite{rht}.
%%%%%%%%%%%%%
The integration map on affine standard simplices is the map
\[\int\limits_{\Delta^n_\fK}\;\;\;\;\colon \Omega_n \to \fK,\]
defined by linearity using the two identities: 
\begin{equation}\label{nneqp}
\mathlarger{\int\limits_{\Delta^n_\fK} \eta} = 0, \quad\quad \text{if } \eta\in\Omega_n^p, \text{ with}\;p \neq n,
\end{equation}
%while on the $n$-forms it is defined as
\begin{equation}\label{n=p}
\int\limits_{\Delta^n_\fK}x_0^{k_0}\cdots x_n^{k_n} dx_0\wedge\dots\wedge\widehat{dx_i}\wedge\dots\wedge dx_n = (-1)^i \frac{k_0!\cdots k_n!}{(k_0+ \dots+ k_n + n)!}\,.
\end{equation}
In Construction \ref{h_m} we describe the family of maps $h_m\in \homom^{-1}_{\mathbb{K}}(\Omega_m, \Omega_m)$ defined by Dupont in \cite{dup}.

\begin{notation}
We use the following notation: 
\[\widehat\Delta^n = \{(s, t_0, \dots, t_n)\in \fK^{n + 1}| \; s + t_0 + \dots + t_n = 1\};\]
while the DG-vector space of polynomial differential forms on $\widehat\Delta^n$ is denoted by   $\widehat\Omega_n$.
\end{notation}
\begin{construction}\label{h_m}
Dupont uses $\mathbb R$ as base field, but his construction works in any field of characteristic $0$.
The map $f_j\colon[0] \to [m]$ is defined as $f_j(0) = j$; to this one we associate the map $\widehat{f_j}$ defined as: 
\[\widehat{f_j}\colon \widehat\Delta^0 \times \Delta^m_\fK\to\Delta_\fK^m,\quad\quad \widehat{f_j}((s, t_0), v) = se_j + t_0 v = se_j + (1 - s)v.\]
For any $\eta \in \Omega_m$, since  $s + t_0 = 1$ and $ds + dt_0 = 0$, there are unique forms $\alpha_\eta, \;\beta_\eta \in \Omega_m[s]$ such that $\widehat{f_j}^\ast(\eta) = ds \wedge \alpha_\eta + \beta_\eta$, where $\widehat{f_j}^\ast\colon \widehat\Omega_0\otimes\Omega_m\to \Omega_m$ is the pull-back map. 
First suppose 
\[\alpha_\eta =(1 - s)^a s^b x_0^{k_0}\cdots x_n^{k_n} dx_{c_1}\wedge \dots\wedge dx_{c_l}\] 
then, following Dupont's notation, we define $h_j\in\homom_\fK( \Omega_m,\Omega_m)^{-1}.$ 
\[\begin{split}h_j(\eta) &= \left(\int\limits_0^1(1 - s)^as^b ds\right) x_0^{k_0}\cdots x_n^{k_n}dx_{c_1}\wedge \dots\wedge dx_{c_l} \\&= \left(\int\limits_{\widehat\Delta^0}t_0^as^b dt_0\right) x_0^{k_0}\cdots x_n^{k_n}dx_{c_1}\wedge \dots\wedge dx_{c_l}.\end{split}\]
Now we can extend this by linearity.
% Here with $\mathlarger{\int\limits_0^1(1 - s)^as^bds}$ we mean a formal integration.
% $P(1) - P(0)$ where $P(s)$ is a polynomial such that its formal derivative is $(1 - s)^as^b$.
%With $\alpha_\eta$ as above it is equivalent, in our notation, to define 
%\[h_j(\eta) = \left(\int\limits_{\widehat\Delta^0} t_0^a s^b dt_0\right) x_0^{k_0}\cdots x_n^{k_n} d_{x_{c_1}, \dots, x_{c_l}}.\]

For any strictly increasing morphism$f\colon[n] \to [m]$ we define:
\[h_f\in\homom_\fK^{-n-1}( \Omega_m,\Omega_m), \quad\quad h_f = h_{f(n)}\circ \dots \circ h_{f(0)},\]
and $h_m\in\homom_\fK^{-1}(\Omega_m, \Omega_m)$ is:
\[h_m(\eta) = \sum\limits_{n = 0}^m\sum\limits_{f\in I(n, m)} \omega_f\wedge h_f(\eta).\]
\end{construction}
Next we describe  some properties of the maps $h_f$. These results were proved by Dupont in different parts of Chapter 2 of \cite{dup}.
\begin{lemma}\label{proph_f}
Take $f\colon[n] \to [m]$ and $g\colon[m]\to[p]$ then:
\begin{enumerate}
\item \[g^\ast\circ h_{gf} = h_f\circ g^\ast,\]
\item \[[h_f, d](\eta) = h_f(d\eta) + (-1)^ndh_f(\eta) = \int\limits_{\Delta_\fK^n}f^\ast\eta - \sum\limits_{i = 0}^n (-1)^i h_{f\delta_i}(\eta).\]
We use the convention that $h_{f\delta_0}$ is the identity.
\end{enumerate}
\end{lemma}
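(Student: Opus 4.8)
The plan is to establish the two identities by reducing everything to the single-variable case $n=0$, where $h_f$ is built from one operator $h_j$, and then to bootstrap to general $f$ by composition. I would first treat part (1) for a morphism $f_j\colon[0]\to[m]$ and an arbitrary $g\colon[m]\to[p]$. The key observation is that the associated affine maps fit into a commutative square: $\widehat{(gf_j)}$ and $\widehat{f_j}$ are compatible with $g\colon\Delta^m_\fK\to\Delta^p_\fK$ in the sense that $g\circ\widehat{f_j}=\widehat{gf_j}\circ(\id_{\widehat\Delta^0}\times g)$ (both send $((s,t_0),v)$ to $s e_{g(j)}+(1-s)g(v)$). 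Pulling back a form $\eta\in\Omega_p$ along both routes and comparing the $ds\wedge(-)$ components shows that the polynomial $\alpha$-part transforms by $g^\ast$ and is unaffected by the integration in the $s$-variable, since $g^\ast$ does not involve $s$; this yields $g^\ast h_{gf_j}=h_{f_j}g^\ast$ after checking that the $s$-integration commutes with the (linear, $s$-free) substitution $g^\ast$. The general case $f\colon[n]\to[m]$ follows by writing $h_f=h_{f(n)}\circ\dots\circ h_{f(0)}$, factoring $f=\delta_{f(n)}\circ\cdots$ appropriately, and iterating the rank-one statement; one must be a little careful to insert the correct intermediate maps so that each step is an instance of the $n=0$ identity, but this is bookkeeping.

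For part (2), I would again start with $n=0$ and $f=f_j$. Writing $\widehat{f_j}^\ast\eta=ds\wedge\alpha_\eta+\beta_\eta$ with $\alpha_\eta,\beta_\eta\in\Omega_m[s]$, the definition gives $h_j(\eta)=\int_0^1\alpha_\eta\,ds$ (integrating only the $s$-monomials, treating $x_i,dx_i$ as constants). Now compute $d(\widehat{f_j}^\ast\eta)=\widehat{f_j}^\ast(d\eta)$ in the form $ds\wedge\alpha_{d\eta}+\beta_{d\eta}$: expanding $d(ds\wedge\alpha_\eta+\beta_\eta)=-ds\wedge d_m\alpha_\eta+ds\wedge\partial_s\beta_\eta+d_m\beta_\eta$, where $d_m$ is the de Rham differential in the $x$-variables, one reads off $\alpha_{d\eta}=\partial_s\beta_\eta-d_m\alpha_\eta$. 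Applying $\int_0^1(-)\,ds$ and using the fundamental theorem of calculus $\int_0^1\partial_s\beta_\eta\,ds=\beta_\eta|_{s=1}-\beta_\eta|_{s=0}$ gives
\[
h_j(d\eta)=\beta_\eta|_{s=1}-\beta_\eta|_{s=0}-d_m\!\left(\int_0^1\alpha_\eta\,ds\right)=\beta_\eta|_{s=1}-\beta_\eta|_{s=0}-d_m h_j(\eta).
\]
So $h_j(d\eta)+d h_j(\eta)=\beta_\eta|_{s=1}-\beta_\eta|_{s=0}$. Finally I would identify the two boundary terms: at $s=1$ the map $\widehat{f_j}$ sends $(s,t_0,v)\mapsto e_j$, so $\beta_\eta|_{s=1}=(f_j)^\ast\eta=\int_{\Delta^0_\fK}(f_j)^\ast\eta$ (a $0$-simplex integral is just evaluation at the vertex $e_j$), matching the first term on the right of (2); at $s=0$ the map $\widehat{f_j}$ is the projection to $\Delta^m_\fK$, so $\beta_\eta|_{s=0}=\eta$, which matches $-(-1)^0 h_{f_j\delta_0}(\eta)=-\eta$ under the stated convention. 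This settles $n=0$.

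For general $f$ of rank $n\geq1$ I would proceed by induction on $n$, writing $h_f=h_{f(n)}\circ h_{f'}$ where $f'\colon[n-1]\to[m]$ has image $\{f(0),\dots,f(n-1)\}$, and using the graded commutator identity $[AB,d]=A[B,d]\pm[A,d]B$ together with the inductive hypothesis for $h_{f'}$ and the rank-one result for $h_{f(n)}$. The cross terms will produce the integrals $\int_{\Delta^{n-1}_\fK}$ of pullbacks of lower-rank forms, and these must be matched against the sum $\sum_i(-1)^i h_{f\delta_i}(\eta)$; here part (1) is needed to move a pullback past an $h$, and Fubini for iterated simplex integrals (or rather the compatibility $\int_{\Delta^n}f^\ast=\int_{\Delta^{n-1}}(f\delta_i)^\ast\circ(\text{boundary})$ built from the $n=0$ case) is needed to recognize the iterated single-variable integrals as a genuine $n$-simplex integral. \textbf{The main obstacle} I anticipate is precisely this last bookkeeping: tracking the signs $(-1)^i$ and the degree shifts $(-1)^n$ through the iterated commutator, and verifying that the nested one-variable integrations assemble into $\int_{\Delta^n_\fK}f^\ast\eta$ with the boundary contributions landing exactly on the terms $h_{f\delta_i}(\eta)$. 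Everything else is either a direct computation in one variable or a formal consequence of the associativity of composition of the $h_j$'s.
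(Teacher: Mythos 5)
First, a point of reference: the paper offers no proof of this lemma at all --- it is quoted from Chapter 2 of \cite{dup} --- so there is no internal argument to compare yours against; your outline is, in substance, Dupont's own route. The parts you actually carry out are correct. For (1), the square $g\circ\widehat{f_j}=\widehat{gf_j}\circ(\id\times g)$ does commute, it gives $\alpha_{g^\ast\eta}=g^\ast\alpha_{\eta}$, and since $g^\ast$ is an $s$-independent linear substitution it passes through $\int_0^1(-)\,ds$; the general case of (1) then follows immediately from $h_{gf}=h_{g(f(n))}\circ\dots\circ h_{g(f(0))}$, with no delicate factoring of $f$ into faces required. For the base case of (2), the Stokes computation $h_j(d\eta)+dh_j(\eta)=\beta_\eta|_{s=1}-\beta_\eta|_{s=0}=\int_{\Delta^0_\fK}f_j^\ast\eta-\eta$ is exactly right, including the identification of the two boundary terms.

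The inductive step is where the proposal stops short of a proof, and the gap is precisely the item you flag as ``the main obstacle''. Writing $A=h_{f(n)}$ and $B=h_{f|_{[n-1]}}$ and combining the inductive hypothesis with the rank-one identity, the $AdB$ cross terms cancel and one is left with
\[
h_fd+(-1)^n dh_f \;=\; A\circ\Bigl(\int_{\Delta^{n-1}_\fK}(f\delta_n)^\ast\Bigr)\;-\;\sum_{i=0}^{n}(-1)^i h_{f\delta_i}\;+\;(-1)^n\Bigl(\int_{\Delta^{0}_\fK}f_{f(n)}^\ast\Bigr)\circ B ,
\]
where the first term vanishes because $h_{f(n)}$ annihilates constants and the middle sum collects the terms $A\circ h_{(f\delta_n)\delta_i}=h_{f\delta_i}$ together with $B=h_{f\delta_n}$. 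The whole lemma therefore reduces to the single identity
\[
(-1)^n\int_{\Delta^0_\fK}f_{f(n)}^\ast\bigl(h_{f|_{[n-1]}}(\eta)\bigr)\;=\;\int_{\Delta^n_\fK}f^\ast\eta .
\]
This is not sign bookkeeping: it is a substantive statement identifying an iterated one-variable integral followed by a vertex evaluation with the $n$-simplex integral, and nothing in your sketch establishes it. It is true, and can be closed by evaluating both sides on monomial $n$-forms: on the left each $h_{f(i)}$ contributes a Beta integral $\int_0^1(1-s)^as^b\,ds=a!\,b!/(a+b+1)!$, the product of these factors telescopes to the factorial quotient of Equation \eqref{n=p}, and one must track the sign produced by moving $ds$ past the $dx$'s, which is where the $(-1)^n$ is absorbed. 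Until that computation (or an equivalent Fubini statement for the decomposition of $\widehat\Delta^0\times\Delta^m_\fK$) is written out, part (2) is only proved for $n=0$.
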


\begin{theorem}[Dupont, \cite{dup}]\label{dupont}
Consider for each $m\geq 0$ the two operators 
%$k_m$ defined in Construction \ref{k_m} and 
\[\pi_m\in\homom^0_\fK(\Omega_m, \Omega_m),\quad\quad \pi_m(\eta) = \sum\limits_{n = 0}^m\sum\limits_{f\in I (n, m)}\left(\mathlarger{\int_{\Delta^n_\fK}f^\ast\eta}\right)\omega_f,\]
%Then exist maps $h_m\in\homom^{-1}(\Omega_m, \Omega_m)$ such that
the following diagram is a simplicial contraction
\[\contr{\sC_\bullet}{\Omega_\bullet}{i_\bullet}{h_\bullet}{\pi_\bullet}\]
\end{theorem}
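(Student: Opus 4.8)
The plan is to verify the three kinds of properties required of a simplicial contraction in turn: the algebraic identities $\pi_m i_m = \id_{\sC_m}$ and $i_m\pi_m - \id_{\Omega_m} = d h_m + h_m d$, and the simpliciality, i.e.\ compatibility of $\pi_\bullet$ and $h_\bullet$ with the cosimplicial structure maps $g^\ast$. Most of the work rests on Lemma~\ref{proph_f} and Proposition~\ref{wff}, and the strategy is essentially to reduce every statement about $h_m$, $\pi_m$ to the corresponding statement about the $h_f$ by expanding the defining sums $h_m(\eta)=\sum_{n,f}\omega_f\wedge h_f(\eta)$ and $\pi_m(\eta)=\sum_{n,f}(\int_{\Delta^n}f^\ast\eta)\,\omega_f$.

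First I would check $\pi_m i_m = \id$. Since $i_m$ is just the inclusion of $\sC_m$, it suffices to evaluate $\pi_m$ on a Whitney form $\omega_g$ with $g\colon[k]\to[m]$ injective, and show $\pi_m(\omega_g)=\omega_g$. By part~(2) of Proposition~\ref{wff}, $f^\ast\omega_g=\sum_{\{h:\, fh=g\}}\omega_h$; since $f$ and $g$ are injective, $f^\ast\omega_g$ is supported in the top degree of $\Omega_n$ exactly when $n=k$ and $f=g$ (up to the unique reparametrization), so by \eqref{nneqp} only the term $f=g$ survives, and part~(1) together with \eqref{n=p} gives $\int_{\Delta^k_\fK} g^\ast\omega_g = \int_{\Delta^k_\fK} k!\, dx_1\wedge\cdots\wedge dx_k = 1$. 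Hence $\pi_m(\omega_g)=\omega_g$, as desired. Here one must be a little careful about the indexing set $I(n,m)$ and the bijection in Proposition~\ref{wff}(2)(c), but it is bookkeeping.

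Next, the homotopy identity. I would compute $dh_m(\eta)+h_m d(\eta)$ by Leibniz on each summand $\omega_f\wedge h_f(\eta)$, using $d\omega_f$ from Proposition~\ref{wff}(3) and rewriting $d h_f(\eta)$ via Lemma~\ref{proph_f}(2) as $(-1)^n\big(\int_{\Delta^n_\fK}f^\ast\eta - \sum_i(-1)^i h_{f\delta_i}(\eta) - h_f(d\eta)\big)$. The terms $\omega_f\wedge h_f(d\eta)$ reassemble into $h_m(d\eta)$ and cancel; the terms $(\int_{\Delta^n_\fK}f^\ast\eta)\,\omega_f$ reassemble into $\pi_m(\eta)$; and the remaining sum — over $f$, over $k$ from $d\omega_f$, and over $i$ from Lemma~\ref{proph_f}(2) — should telescope: the face $\delta_k$ appearing in $d\omega_f$ is matched against the face $\delta_i$ in $h_{f\delta_i}$, so that $\omega_f\wedge h_{f\delta_i}(\eta)$ for $f$ of rank $n$ cancels with a term coming from an $\omega_{f'}$ of rank $n+1$ with $f'\delta_k=f$. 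The only terms that fail to cancel are the extreme ones: the $i=0$ term, where by convention $h_{f\delta_0}=\id$, contributes $\sum_{n,f}\pm\,\omega_f\wedge\eta$; I expect after sign bookkeeping this collapses to $-\eta$ (for $n=0$, $f=f_j$, summing $\omega_{f_j}=x_j$ over $j$ gives $1$) plus a term that, combined with the rank-$m$ boundary contributions, vanishes. Getting all of these signs to line up and confirming that the telescoping is exact is the step I expect to be the main obstacle — it is the genuine content of Dupont's theorem and the place where the precise form of Lemma~\ref{proph_f}(2) is indispensable.

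Finally, simpliciality: for $g\colon[n]\to[m]$ in $\Delta$ I must show $g^\ast\pi_m = \pi_n g^\ast$, $g^\ast h_m = h_n g^\ast$, and $g^\ast i_m = i_n g^\ast$ (the last being Proposition~\ref{wff}(2)). For $\pi$ and $h$ the key input is Lemma~\ref{proph_f}(1), $g^\ast h_{gf}=h_f g^\ast$, combined with Proposition~\ref{wff}(2) to pull back the Whitney forms $\omega_f$: applying $g^\ast$ to $h_m(\eta)=\sum_{f}\omega_f\wedge h_f(\eta)$ and using $g^\ast(\omega_f\wedge h_f(\eta)) = g^\ast\omega_f\wedge g^\ast h_f(\eta)$, one reindexes the sum over factorizations and matches it termwise with $h_n(g^\ast\eta)=\sum_{f'}\omega_{f'}\wedge h_{f'}(g^\ast\eta)$; the analogous manipulation with the integral $\int_{\Delta^n_\fK}$ and the change-of-variables/pushforward compatibility handles $\pi$. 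These reindexing arguments are routine once the factorization bijections of Proposition~\ref{wff}(2) are set up carefully, so I would present them compactly.
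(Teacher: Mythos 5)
Your plan is correct and follows essentially the same route as the paper: the paper proves the identical statement for the family $k_m$ (Theorem~\ref{manettidupont}) by exactly your three steps --- the orthogonality relations from Proposition~\ref{wff}(1)--(2) for $\pi_m i_m=\id$, the Leibniz expansion plus Lemma~\ref{propk_f}(2) with the telescoping against $d\omega_f$ via Proposition~\ref{wff}(3) for the homotopy identity (including the $n=0$ observation $\sum_{f\in I(0,m)}\omega_f=1$ producing $-\eta$), and Lemma~\ref{propk_f}(1) with Proposition~\ref{wff}(2) for simpliciality --- and then notes that the same argument, with Lemma~\ref{proph_f} in place of Lemma~\ref{propk_f}, proves Theorem~\ref{dupont}.
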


\begin{remark}\label{endc2}
The first definition of Whitney elementary forms  can be found in Section 27 of \cite{whit}; they are defined exactly as those forms $\omega_{i_0, \dots, i_n}$ which appear in the proof of Proposition \ref{wff}. We prefer Definition \ref{wef} due to Point (2) and (3) of Proposition \ref{wff}. 

The notation used for the space of polynomial differential forms is the same of \cite{getzler}. An other common notation present in literature is the one of \cite{rht} - here $\Omega_m$ is denoted with $(A_{PL})_m$.

The family of maps $\{\pi_m\}$ was defined by Whitney in \cite{whit}, Dupont described the family of maps $\{h_m\}$ explicitly in the original proof of Theorem \ref{dupont} given in \cite{dup}. Later Getzler  showed in \cite{getzler} that side conditions $\pi_mh_m = h_m^2 = 0$ (and hence $h_mi_m = 0$) hold.

\end{remark}

%%%%%%%%%%%%%%%%%%%%%%%%%%%%%

%%%%%%%%%%%%%%%%%%%%%%%%%%%%%

%%%%%%%%%%%%%%%%%%%%%%%%%%%%%

%%%%%%%%%%%%%%%%%%%%%%%%%%%%%

%%%%%%%%%%%%%%%%%%%%%%%%%%%%%

%%%%%%%%%%%%%%%%%%%%%%%%%%%%%

%%%%%%%%%%%%%%%%%%%%%%%%%%%%%

%%%%%%%%%%%%%%%%%%%%%%%%%%%%%

%%%%%%%%%%%%%%%%%%%%%%%%%%%%%

%%%%%%%%%%%%%%%%%%%%%%%%%%%%%

\section{The proof of Dupont's Theorem}\label{c2}
In this section we describe a family of maps $k_m$ such that the diagram
\[\contr{\sC_\bullet}{\Omega_\bullet}{i_\bullet}{k_\bullet}{\pi_\bullet}\]
is a simplicial contraction. The family of maps $k_m$ and the proof of this result was shown to us by Manetti at a cycle of seminars at the University ``La Sapienza''.

Recall that $I(n, m)$ is the subset of $\mor_\Delta([n], [m])$ of injective (and hence strictly increasing) morphisms.
\begin{construction}\label{k_m}
Take $f\in I(n, m)$ we define:
\[\widehat f\colon \widehat\Delta^n\times\Delta^m_\fK \to \Delta^m_\fK, \quad\quad ((s, t_0, \dots, t_n), v)\mapsto sv + \sum\limits_{i= 0}^n t_i e_{f(i)}.\]
The operator $k_f\in\homom_\fK^{-n-1}(\Omega_m, \Omega_m)$ is:
\[k_f\colon\Omega_m\xrightarrow{\quad\widehat{f}^\ast\quad}\widehat\Omega_n\otimes\Omega_m\xrightarrow{\mathlarger{\;\;\int\limits_{\widehat\Delta^n}\cdot\;}\otimes\id\quad}\Omega_m,\]
where $\widehat f^\ast\colon \Omega_m\to\widehat\Omega_n\otimes\Omega_m$ is the usual pull-back map.

The map $k_m \in \homom_\fK^{-1}(\Omega_m, \Omega_m)$ is 
\[k_m(\eta) =  \sum\limits_{n = 0}^m\sum\limits_{f\in I(n, m)} \omega_f\wedge k_f(\eta).\]
\end{construction}
The next lemma is exactly Lemma \ref{proph_f}, but with the maps $k_f$ instead of the maps $h_f$.
We don't give a proof of this lemma. This result follows, a posteriori, from Lemma \ref{n=0}, Lemma \ref{case1} and Lemma \ref{case2}.

\begin{lemma}\label{propk_f}
Take $f\colon[n] \to [m]$ and $g\colon[m]\to[p]$ then:
\begin{enumerate}
\item \[g^\ast\circ k_{gf} = k_f\circ g^\ast,\]
\item \[[k_f, d](\eta) = k_f(d\eta) + (-1)^ndk_f(\eta) = \int\limits_{\Delta_\fK^n}f^\ast\eta - \sum\limits_{i = 0}^n (-1)^i k_{f\delta_i}(\eta).\]
We use the convention that $k_{f\delta_0}$ is the identity.
\end{enumerate}
\end{lemma}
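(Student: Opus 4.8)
\textbf{Proof plan for Lemma~\ref{propk_f}.}
The strategy is to prove both identities by a direct computation on the integral presentation of $k_f$, reducing everything to the case $n=0$ by an inductive unwinding of the composition $k_f=k_{f(n)}\circ\dots\circ k_{f(0)}$ that will (a posteriori) follow from the same analysis. For part (1), the key observation is a factorization of pull-back squares: given $g\colon[m]\to[p]$ and $f\colon[n]\to[m]$, the affine map $\widehat{gf}\colon\widehat\Delta^n\times\Delta^m_\fK\to\Delta^p_\fK$ factors compatibly through $g\colon\Delta^m_\fK\to\Delta^p_\fK$, so that $\widehat{gf}=g\circ\widehat f$ after the obvious identifications on the source. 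Applying pull-backs and using that $\int_{\widehat\Delta^n}$ is computed fibrewise over the $\Delta^m_\fK$ (resp.\ $\Delta^p_\fK$) factor — hence commutes with the map $\id\otimes g^\ast$ — yields $g^\ast\circ k_{gf}=k_f\circ g^\ast$. The only care needed is bookkeeping: $\widehat f^\ast$ lands in $\widehat\Omega_n\otimes\Omega_m$, and one must check that integration against the $\widehat\Delta^n$ fibre is a well-defined $\Omega_m$-linear (resp.\ $\Omega_p$-linear) map, which is where the axiomatic properties of $\int$ from \cite{rht} enter.

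For part (2), I would first establish it for $n=0$, i.e.\ for the maps $k_{f_j}$ with $f_j\colon[0]\to[m]$, $f_j(0)=j$. Here $\widehat{f_j}\colon\widehat\Delta^0\times\Delta^m_\fK\to\Delta^m_\fK$, $((s,t_0),v)\mapsto sv+t_0e_j$, and the commutator $[k_{f_j},d]$ is computed by writing $\widehat{f_j}^\ast(\eta)=ds\wedge\alpha_\eta+\beta_\eta$ as in Construction~\ref{h_m} and using the fundamental theorem of calculus in the variable $s$: integrating $d\widehat{f_j}^\ast(\eta)$ over the fibre $\widehat\Delta^0$ picks up the boundary contribution at $s=0$ and $s=1$. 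The value at $s=1$ is $\int_{\Delta^0_\fK}f_j^\ast\eta$ (which is just the coefficient extraction, since $\Delta^0_\fK$ is a point), and the value at $s=0$ gives $-\eta$ up to sign, matching the term $-(-1)^0k_{f_j\delta_0}(\eta)=-\eta$ under the stated convention. This is essentially Stokes' theorem for the projection $\widehat\Delta^0\times\Delta^m_\fK\to\Delta^m_\fK$, phrased algebraically.

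The general case then follows by an induction on $n$, composing the $n=0$ identity with itself: writing $k_f=k_{f(n)}\circ k_{f\delta_0'}$ for an appropriate truncation and using that $d$ graded-commutes past each $k_{f(i)}$ with the already-known commutator, the boundary terms at the various $s=0$ hyperplanes of $\widehat\Delta^n$ reassemble into $\sum_{i=0}^n(-1)^ik_{f\delta_i}(\eta)$, while the single $s=1$ face collapses $\widehat\Delta^n$ onto $\Delta^n_\fK$ and produces $\int_{\Delta^n_\fK}f^\ast\eta$. The sign $(-1)^n$ in front of $dk_f(\eta)$ is exactly the Koszul sign accumulated by moving $d$ through the degree $-n-1$ operator $\widehat f^\ast$ composed with fibre integration. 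The main obstacle is organizing this induction cleanly: one must track how the faces of $\widehat\Delta^n$ (the hyperplanes $t_i=0$ and $s=0$, $s=1$) correspond to the maps $f\delta_i$ and to the collapse onto $\Delta^n_\fK$, and verify that the iterated fibre integrals $\int_{\widehat\Delta^n}$ factor as advertised through the one-variable integrals $\int_0^1(1-s)^as^b\,ds$ — in other words, that Construction~\ref{k_m} really does agree with the iterated Construction~\ref{h_m} at the level of the operators $k_f$ versus $h_f$, which is the content deferred to Lemmas~\ref{n=0}, \ref{case1} and \ref{case2}.
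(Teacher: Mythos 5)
The paper gives no direct proof of this lemma: it states explicitly that the result is derived \emph{a posteriori} from the identity $k_f=h_f$ (i.e.\ from Lemmas \ref{n=0}, \ref{case1}, \ref{case2}, hence Theorem \ref{mt}) together with Dupont's Lemma \ref{proph_f} for the maps $h_f$. Your part (1) is therefore a genuinely different and more self-contained route, and it is correct: since $g$ acts affinely on vertices, $g\circ\widehat f=\widehat{gf}\circ(\id\times g)$, hence $\widehat f^{\ast}\circ g^{\ast}=(\id\otimes g^{\ast})\circ\widehat{gf}^{\ast}$, and fibre integration over $\widehat\Delta^n$ commutes with $\id\otimes g^{\ast}$; this gives $k_f\circ g^{\ast}=g^{\ast}\circ k_{gf}$ straight from Construction \ref{k_m}, without waiting for Section \ref{c3}.

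Part (2) is where there is a gap. Your inductive step writes $k_f=k_{f(n)}\circ k_{f|_{[n-1]}}$ (``composing the $n=0$ identity with itself''), but $k_f$ is \emph{not} defined as a composition of one-variable operators --- that is the definition of $h_f$ --- and the factorization of $k_f$ through the operators $k_{f(i)}$ is precisely the content of Theorem \ref{mt}, i.e.\ of Lemmas \ref{n=0}, \ref{case1} and \ref{case2}. You acknowledge this dependence at the end, but then your argument for (2) either presupposes the main theorem of the paper (in which case it collapses back to the paper's own a posteriori derivation via Dupont's Lemma \ref{proph_f}) or is incomplete as it stands. The clean fix is to drop the induction and run the Stokes argument directly on $\widehat\Delta^n$ for every $n$: the $(n+1)$-simplex $\widehat\Delta^n$ has exactly $n+2$ codimension-one faces; on the face $s=0$ the map $\widehat f$ factors through $f\colon\Delta^n_\fK\to\Delta^m_\fK$ and contributes $\int_{\Delta^n_\fK}f^{\ast}\eta$, while on the face $t_i=0$ it restricts to $\widehat{f\delta_i}$ and contributes the term $(-1)^i k_{f\delta_i}(\eta)$; no composition or induction is needed. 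Finally, a bookkeeping slip in your $n=0$ case: in Construction \ref{k_m} the parameter $s$ multiplies $v$ (unlike Construction \ref{h_m}, where it multiplies $e_j$), so it is the endpoint $s=1$ that gives the identity map and hence the term $-\eta$, and the endpoint $s=0$ that collapses to the vertex $e_{f_j(0)}$ and gives $\int_{\Delta^0_\fK}f_j^{\ast}\eta$ --- the opposite of what you wrote.
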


The following theorem corresponds to Theorem \ref{dupont}, by replacing the maps $h_m$ with $k_m$.
%This is not surprising since in the proof of Theorem  we need only the two properties of Lemma \ref{propk_f}. 
%Since these properties holds also for the $h_m$, the the proof is the same if we consider the family $h_m$ instead of $k_m$, we need only that the 

%The first version of the following theorem is due to Dupont \cite{dup}, the proof is of Manetti, the operators $k_m$ of Construction \ref{k_m} are defined in a different way from the ones of \cite{dup}, but in Section \ref{c3} we show that they are the same.

\begin{theorem}[Dupont, \cite{dup}]\label{manettidupont}
Consider for each $m\geq 0$ the operator $k_m$ of Construction \ref{k_m} and $\pi_m$ of Theorem \ref{dupont}
\[\pi_m\in\homom^0_\fK(\Omega_m, \Omega_m),\quad\quad \pi_m(\eta) = \sum\limits_{n = 0}^m\sum\limits_{f\in I (n, m)}\left(\mathlarger{\int_{\Delta^n_\fK}f^\ast\eta}\right)\omega_f,\]
\begin{enumerate}
\item the operator $\pi_m$ is a projector onto $\sC_m$;
\item the identity $k_md + dk_m = i_m\pi_m - \id_{\Omega_m}$ holds;
\item for every  $p\in\mathbb N$ and every $g\colon [p] \to [m]$ we have $k_pg^\ast = g^\ast k_m.$
\end{enumerate}
\end{theorem}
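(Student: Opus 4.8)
The plan is to establish the three assertions in the order (1), (3), (2), since (1) is essentially classical Whitney theory, (3) expresses simpliciality which we reduce to the cosimplicial compatibility of the maps $k_f$, and (2) is the homotopy identity, which is where the real work lies. For (1), I would show directly from Definition~\ref{wef} and Point~(1) of Proposition~\ref{wff} that $\pi_m$ takes values in $\sC_m$ and that $\pi_m\omega_f=\omega_f$ for every $f\in I(n,m)$: indeed $\int_{\Delta^n_\fK}g^\ast\omega_f$ vanishes unless $g=f$ (by Point~(2), $g^\ast\omega_f$ is a sum of $\omega_h$ with $gh=f$, and only the $n$-dimensional contribution survives integration, forcing $g$ injective with image containing that of $f$, hence $g=f$ since both are strictly monotone of the same source), and in that case the integral is $1$ by the computation $f^\ast\omega_f=n!\,dx_1\wedge\cdots\wedge dx_n$ from Proposition~\ref{wff}(1) together with \eqref{n=p}. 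Since the $\omega_f$ span $\sC_m$, this gives $\pi_m i_m=\id_{\sC_m}$, so $\pi_m$ is a projector onto $\sC_m$.

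For (3), I would first prove the building-block identity $k_p\,g^\ast = g^\ast k_m$ by reducing to the maps $k_f$. The key observation is the naturality square for $\widehat f$: for $g\colon[p]\to[m]$ and $f\in I(n,p)$, the composite $\Delta^m_\fK\xrightarrow{\ \widehat{gf}\ }$ factors through $g$ in the appropriate sense, giving $g^\ast\circ k_{gf} = k_f\circ g^\ast$ — this is Point~(1) of Lemma~\ref{propk_f}, which the paper grants us a posteriori but which here is really just the change-of-variables/Fubini statement $\int_{\widehat\Delta^n}\widehat{gf}^\ast\eta = \int_{\widehat\Delta^n}\widehat f^\ast g^\ast\eta$ once one checks $\widehat{gf}=\widehat f\circ(\id\times g)$ as maps $\widehat\Delta^n\times\Delta^p_\fK\to\Delta^m_\fK$. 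Combining this with Point~(2) of Proposition~\ref{wff} (which tells us how $g^\ast$ acts on the $\omega_f$ prefactors) and reorganizing the double sum over $I(n,m)$ into sums over $I(n,p)$ indexed by factorizations, the equality $k_p g^\ast = g^\ast k_m$ follows; since every morphism of $\Delta$ is a composite of faces and degeneracies this handles all $g$, and together with the already-known simpliciality of $i_\bullet$ and $\pi_\bullet$ (Proposition~\ref{wff} and the definition of $\pi_m$, which is natural by the same Fubini argument) we get that the diagram is simplicial.

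For (2), the strategy is to use Lemma~\ref{propk_f}(2) (granted a posteriori) term by term. Writing $k_m(\eta)=\sum_{f\in I(n,m)}\omega_f\wedge k_f(\eta)$ and applying the Leibniz rule for $d$ against $d\omega_f$ computed in Proposition~\ref{wff}(3), one gets
\[
dk_m(\eta)+k_m(d\eta)=\sum_{n,f}\Bigl(d\omega_f\wedge k_f(\eta)+(-1)^n\omega_f\wedge dk_f(\eta)+\omega_f\wedge k_f(d\eta)\Bigr),
\]
and substituting $[k_f,d](\eta)=\int_{\Delta^n_\fK}f^\ast\eta-\sum_{i}(-1)^i k_{f\delta_i}(\eta)$ from Lemma~\ref{propk_f}(2) turns the right-hand side into $\pi_m(\eta)$ plus a telescoping cancellation: the term $\omega_f\wedge(\int_{\Delta^n_\fK}f^\ast\eta)$ assembles into $i_m\pi_m(\eta)$, the $n=0$, $i=0$ convention ($k_{f\delta_0}=\id$) produces the $-\id_{\Omega_m}(\eta)$ (using $\sum_j \omega_{f_j}=\sum_j x_j = 1$, since $\omega_{f_j}$ is the $0$-form $x_j$), and the remaining terms cancel in pairs against the $d\omega_f\wedge k_f(\eta)$ contributions via the matching of face factorizations in Proposition~\ref{wff}(3). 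The main obstacle is precisely this bookkeeping: one must match, for each pair $(g\colon[n+1]\to[m],\ k)$ with $g\delta_k=f$, the term $(-1)^k\omega_g\wedge k_f(\eta)$ coming from $d\omega_f$ against the term $(-1)^k\omega_g\wedge k_{g\delta_k}(\eta)$ coming from $[k_g,d]$ with opposite sign, and verify that absolutely everything not of the form $\omega_f\wedge(\int f^\ast\eta)$ or the single $-\id$ term disappears — the sign chase across the conventions $h_{f\delta_0}=\id$ and the $(-1)^n$ in the graded commutator is the delicate point. Once (2) holds, together with (1) and (3) and the side-condition cleanup already recalled in Remark following Definition~\ref{con} (if needed), the diagram is a simplicial contraction.
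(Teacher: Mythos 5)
Your proposal follows essentially the same route as the paper: part (1) via the orthogonality relations $\int_{\Delta^n_\fK} f^\ast\omega_g = \delta_{fg}$ from Proposition~\ref{wff}, part (2) by expanding $dk_m + k_m d$ termwise, substituting Lemma~\ref{propk_f}(2), extracting $\pi_m(\eta)$ and the $-\eta$ term from the $n=0$ convention $k_{f\delta_0}=\id$ together with $\sum_{f\in I(0,m)}\omega_f = 1$, and cancelling the remainder against the expansion of $d\omega_f$ from Proposition~\ref{wff}(3), and part (3) by combining Lemma~\ref{propk_f}(1) with the pullback formula for elementary forms and reindexing the double sum. The differences are cosmetic (the order of the three parts, and your added sketch of why Lemma~\ref{propk_f}(1) is a change-of-variables identity, which the paper also takes for granted at this stage).
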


\begin{proof}
From Point $(1)$ and Point $(2)$ of Proposition \ref{wff} given $f\in I(n,m)$ we have:
\[\int\limits_{\Delta_\fK^n} f^\ast\omega_f = 1,\quad\quad \int\limits_{\Delta_\fK^n} f^\ast\omega_g = 0\quad \text{if } f \neq g, \]
thus $\pi_m$ projects to $\sC_m$.
% thus $\pi_m$ can be seen as element in $\homom^0(\Omega_m, \sC_m),$ and make sense the equality $i_m\pi_m = \pi_m.$

For every 
$\eta\in \Omega_m$ we have:
\[\begin{split} 
k_m(d\eta) + dk_m\eta 
&=\sum\limits_{n = 0}^m\sum\limits_{f\in I(n, m)} d\omega_f\wedge k_f(\eta) + \omega_f\wedge \left ( (-1)^n dk_f(\eta) + k_f(d\eta)\right )  \\
&=\sum\limits_{n = 0}^m\sum\limits_{f\in I(n, m)} d\omega_f\wedge k_f(\eta) +  \omega_f\wedge \left (\int\limits_{\Delta_\fK^n} f^\ast\eta -\sum\limits_{r = 0}^n (-1)^r k_{f\delta_r}(\eta)\right) \\
&=\sum\limits_{n = 0}^m\sum\limits_{f\in I(n, m)}\left( d\omega_f\wedge k_f(\eta) - \omega_f\wedge \left (\sum\limits_{r = 0}^n (-1)^r k_{f\delta_r}(\eta)\right)\right) + \pi_m(\eta)\,.
\end{split}\]
Since  $k_{f\delta_0} = \id$ and $\sum\limits_{f\in I(0, m)} \omega_f = \sum\limits_{i = 0}^m t_i = 1$ we have:
\[\sum\limits_{f\in I(0,m)} \omega_f\wedge\left(\sum\limits_{r = 0}^0(-1)^r k_{f\delta_r}(\eta)\right) = \sum\limits_{i = 0}^m t_i k_{f\delta_0} (\eta)=\eta.\]
Thus it follows:
\[\begin{split}k_m(d\eta) + dk_m(\eta) - \pi_m(\eta) + \eta  &= \sum\limits_{n = 0}^m\sum\limits_{f\in I(n,m)} d\omega_f \wedge k_f(\eta)\\
&\qquad +\sum\limits_{n = 1}^m\sum\limits_{f\in I(n,m)} -\omega_f\wedge \sum\limits_{r = 0}^n (- 1)^r k_{f\delta_r}(\eta)\,.\end{split}\]
Using the result of Point 3 of Proposition \ref{wff} it is possible to show that the right hand side of this equation vanishes:
\[\begin{split}\sum\limits_{n = 0}^m\sum\limits_{f\in I(n,m)} d\omega_f\wedge k_f(\eta) &= \sum\limits_{n = 0}^{m - 1}\sum\limits_{f\in I(n,m)} d\omega_f\wedge k_f(\eta)\\
&=\sum\limits_{n = 0}^{m - 1}\sum\limits_{f\in I(n,m)}\sum\limits_{r = 0}^n (-1)^r\sum\limits_{\{g| f = g\delta_r\}} \omega_g \wedge k_{g\delta_r}(\eta) \\ 
&+\sum\limits_{n = 1}^{m}\sum\limits_{g\in I(n,m)}\sum\limits_{r = 0}^n (-1)^r \omega_g\wedge k_{g\delta_r}(\eta).\end{split}\]
And thus we proved the identity.

Finally, from Lemma \ref{propk_f} follows that
\[\begin{split} g^\ast k_m(\eta) &= \sum\limits_{n = 0}^m\sum\limits_{f\in I(n,m)} g^\ast(\omega_f) \wedge g^\ast k_f(\eta) = \sum\limits_{n = 0}^m\sum\limits_{f\in I(n,m)} \sum\limits_{\tiny{\begin{array}{c}h\in I(n,p),\\ f=gh\end{array}}} \omega_h\wedge g^\ast k_f(\eta)\\
&=\sum\limits_{n = 0}^m\sum\limits_{h\in I(n,p)}\omega_h \wedge g^\ast k_{gh}(\eta) = \sum\limits_{n = 0}^m\sum\limits_{h\in I(n,p)} \omega_h\wedge k_h(g^\ast\eta) = k_p(g^\ast\eta).\end{split}\]
\end{proof} 

\begin{remark}
This proof of Theorem \ref{manettidupont} was shown to a small audience by Manetti. Dupont in \cite{dup} showed that Lemma \ref{propk_f} holds also for the family of maps $h_m$. Since the proof of Theorem \ref{manettidupont} is based only on Lemma~\ref{propk_f} and on some properties of Whitney elementary forms, the same proof works also for Theorem~\ref{dupont}. 
\end{remark}

%%%%%%%%%%%%%%%%%%%%%%%%%%%%%

%%%%%%%%%%%%%%%%%%%%%%%%%%%%%

%%%%%%%%%%%%%%%%%%%%%%%%%%%%%

%%%%%%%%%%%%%%%%%%%%%%%%%%%%%

%%%%%%%%%%%%%%%%%%%%%%%%%%%%%

\bigskip
\section{Equivalence of the families $h_m$ and $k_m$}\label{c3}

In this section we compare the family of maps $h_m$ of Construction \ref{h_m} and the family of maps of Construction \ref{k_m}. The main result of this section is Theorem \ref{mt}, where we prove that the two families coincide. To make the proof more readable we will split it in many lemmas.

\begin{theorem}\label{mt}
For every $m\in\mathbb N$ we have that $k_m = h_m.$
\end{theorem}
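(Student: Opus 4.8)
The plan is to reduce the equality $k_m = h_m$ to the equality of the corresponding families $k_f = h_f$ for all strictly increasing $f\colon[n]\to[m]$, since both $k_m$ and $h_m$ are built from these by the same formula $\sum_{n}\sum_{f\in I(n,m)}\omega_f\wedge(\text{--})_f(\eta)$. For $n=0$ one has $f=f_j\colon[0]\to[m]$, and here the two constructions must be compared directly: Dupont's $h_{f_j}$ integrates the $ds$-component of $\widehat{f_j}^\ast\eta$ against $\int_0^1(1-s)^a s^b\,ds$, whereas $k_{f_j}$ integrates $\widehat{f_j}^\ast\eta$ over $\widehat\Delta^0$ using the axiomatic integration \eqref{nneqp}--\eqref{n=p}. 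Writing $\widehat\Delta^0=\{(s,t_0):s+t_0=1\}$ and noting that $\int_{\widehat\Delta^0}$ picks out exactly the $1$-form part, one checks that $\int_{\widehat\Delta^0}\big(ds\wedge\alpha_\eta+\beta_\eta\big)=\int_{\widehat\Delta^0}ds\wedge\alpha_\eta$ and that this agrees term-by-term with Dupont's prescription; this is Lemma~\ref{n=0} referenced in the text. The substance is then the inductive step.

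\textbf{The inductive step.} For $f\in I(n,m)$ with $n\geq 1$, factor $f=f(n)\text{-th insertion}$ appropriately; more precisely, writing $f' = f\delta_n\colon[n-1]\to[m]$ (the restriction omitting the last vertex) and $f(n)=j$, Dupont's definition gives $h_f = h_j\circ h_{f'}$ by construction. So it suffices to show $k_f = h_{f(n)}\circ k_{f'}$, i.e. that the single-variable Dupont operator $h_j$ relates the $n$-fold and $(n-1)$-fold $k$-operators the same way it relates the $h$-operators. Geometrically $\widehat f\colon\widehat\Delta^n\times\Delta^m\to\Delta^m$ sends $((s,t_0,\dots,t_n),v)\mapsto sv+\sum t_ie_{f(i)}$, and this factors through $\widehat\Delta^0\times(\widehat\Delta^{n-1}\times\Delta^m)\to\widehat\Delta^0\times\Delta^m\to\Delta^m$ by first applying $\widehat{f'}$ in the last coordinates and then $\widehat{f_j}$: one rescales $(s,t_0,\dots,t_n)$ into $(s',t_0,\dots,t_{n-1})\in\widehat\Delta^{n-1}$ and a remaining parameter feeding $\widehat{f_j}$. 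Thus $\widehat f^\ast$ is a composite of pull-backs, and Fubini for the axiomatic integral — integrating first over the $\widehat\Delta^{n-1}$-factor, then over the residual $\widehat\Delta^0$ — yields $k_f = h_j\circ k_{f'}$ provided one matches the Beta-integral weights $\int_0^1(1-s)^as^b\,ds$ coming out of the last integration with Dupont's normalization. I would split this, as the excerpt does, into the two cases (Lemma~\ref{case1} and Lemma~\ref{case2}) according to whether the form $\alpha_\eta$ produced by the first pull-back does or does not already involve the relevant $ds$ differential, since the combinatorics of which monomials survive differ.

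\textbf{Main obstacle.} The hard part is the bookkeeping in the change of variables on $\widehat\Delta^n$: unlike the $h$-construction, which processes one vertex at a time and stays inside $\Omega_m[s]$, the $k$-construction pulls back to $\widehat\Omega_n\otimes\Omega_m$ all at once, and one must verify that iterating Dupont's single-variable recipe really reconstitutes the full simplicial integral — i.e. that the barycentric-coordinate substitutions $t_i\mapsto$ (rescaled coordinates) compose correctly and that the iterated one-dimensional Beta integrals multiply to the correct multinomial factor $\frac{k_0!\cdots k_n!}{(k_0+\cdots+k_n+n)!}$ appearing in \eqref{n=p}. Once the factorization $\widehat f^\ast = (\text{stuff})\circ\widehat{f_j}^\ast$ is set up cleanly and Fubini is invoked, the equality $k_f=h_f$ follows by induction on $n$, and then $k_m=h_m$ is immediate from the common assembly formula. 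I would therefore devote most of the write-up to making the simplex-factorization and the induction hypothesis precise, and treat the surviving-monomial case analysis as the routine-but-necessary verification.
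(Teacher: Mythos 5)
Your proposal shares its entire skeleton with the paper's proof: reduce $k_m=h_m$ to $k_f=h_f$ for $f\in I(n,m)$, induct on $n$ with Lemma~\ref{n=0} as base case, and reduce the inductive step to the single identity $k_f=h_{f(n)}\circ k_{f|_{[n-1]}}$, split into the cases of Lemmas~\ref{case1} and~\ref{case2}. Where you genuinely diverge is in the proposed mechanism for that identity. The paper proves it by brute force: it evaluates both $k_f(\eta)$ and $h_{f(n)}\bigl(k_{f|_{[n-1]}}(\eta)\bigr)$ on monomial forms, tracking which terms survive the degree constraints of the fibre integral, and matches the resulting multinomial coefficients directly. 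You instead propose to factor $\widehat f$ through the join decomposition $\widehat\Delta^n\dashrightarrow\widehat\Delta^0\times\widehat\Delta^{n-1}$, i.e.\ $\widehat f((s,t_0,\dots,t_n),v)=t_ne_{f(n)}+u\,\widehat{f|_{[n-1]}}\bigl((s/u,t_0/u,\dots,t_{n-1}/u),v\bigr)$ with $u=1-t_n$, and then invoke Fubini for the axiomatic integral. This is the more conceptual route and, if executed, would replace most of the monomial bookkeeping of Lemmas~\ref{case1} and~\ref{case2} by a single change-of-variables identity. The caveat is that this is exactly where the content lives: the join map is only a rational (not polynomial) reparametrization, the integral of Equations~\eqref{nneqp}--\eqref{n=p} is defined axiomatically on polynomial forms rather than measure-theoretically, so the asserted Fubini identity $\int_{\widehat\Delta^n}=\int_{\widehat\Delta^0}\circ\int_{\widehat\Delta^{n-1}}$ in the new coordinates must itself be verified on monomials --- and that verification is precisely the Beta/multinomial computation $\frac{k_0!\cdots k_n!}{(k_0+\cdots+k_n+n)!}$ that the paper's two lemmas carry out by hand. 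Your proposal correctly identifies the crux and a viable (arguably cleaner) strategy for it, but as written it defers the substance to an unproved Fubini statement, so it is a sound plan rather than a complete argument; also note that your description of the case split (whether $\alpha_\eta$ involves $ds$) differs from the paper's actual criterion, namely whether $|\{c_1,\dots,c_q\}\cap\mathrm{Im}(f|_{[n-1]})|$ equals $n-1$ or $n$, which governs whether both sides vanish for degree reasons or must be computed and compared.
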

\begin{proof}
From the definition of $h_m$ and $k_m$ if follows that it is enough to prove that for each $f\in I(n,m)$ the identity $k_f = h_f$ holds. 
We proceed by induction on $n$. If $n = 0$ this is Lemma \ref{n=0}; thus suppose $n > 0$ and assume the statement true for every function in $I (p, m)$ with $p < n$. Fix $f\in I(n, m)$, in particular the statement holds for $f|_{[n - 1]},$ and then we have the chain of equality:
\[h_f = h_{f(n)} \circ h_{f|_{[n - 1]}} = h_{f(n)} \circ k_{f|_{[n - 1]}}.\]
The only thing left to prove is $k_f = h_{f(n)} \circ k_{f|_{[n - 1]}}$.
Let $f\colon[n]\to[m]$ be an injective map.
By linearity we can assume that $\eta$ is the q-form 
\[\eta = x_0^{k_0}\cdots x_{f(n)}^0\cdots x_m^{k_m} dx_{c_1}\wedge\dots\wedge dx_{c_q}\]
with $c_1 < c_2 < \dots < c_q$ and $c_i \neq f(n), \text{ for all } i.$

If $q \leq n$ we have $0 = k_{f}(\eta) = h_f(\eta)$, since they are forms of negative degree, hence the equality holds.

Suppose now $q > n$. Let $C \colon= \{c_1, \dots, c_q\}$ and $\text{Im}(f|_{[n-1]})$ if the intersection is such that $|C\cap \text{Im}(f|_{[n-1]})| < n - 1$, then by the same degree argument it follows that $0 = h_{f}(\eta) = k_{f}(\eta)$.
If $|C\cap\text{Im}(f|_{[n-1]})| = n-1$ we are under the hypothesis of Lemma \ref{case1}, so $k_f(\eta) = h_f(\eta).$ If $|C\cap\text{Im}(f|_{[n-1]})| = n$ then we are under the hypothesis of Lemma \ref{case2}, so $k_f(\eta) = h_f(\eta).$
\end{proof}

The next lemmas provide the technicalities behind Theorem \ref{mt}, whose inductive step will be Lemma \ref{n=0} is the inductive base of the proof; Lemma \ref{case1} and Lemma \ref{case2} address the computation of the functions $h_f(\eta)$ and $k_f(\eta)$ in some key cases.

\begin{notation}
When necessary, we use the notation $d_{x_0,\dots, x_n}$ for the differential form $dx_0 \wedge\dots \wedge dx_n.$ 
\end{notation}

\begin{lemma}\label{n=0}
For each integer $0\leq j\leq m,$ consider the map:
\[f_j\colon[0]\to[m]\quad\quad\quad\quad f_j(0) = j.\] 
Then the map $h_j$ of Construction \ref{h_m} and the maps $k_{f_j}$ of Construction \ref{k_m} coincide.
\end{lemma}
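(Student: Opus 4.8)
The goal is to show that for the zero-dimensional faces $f_j\colon[0]\to[m]$, the Dupont operator $h_j$ of Construction~\ref{h_m} agrees with the operator $k_{f_j}$ of Construction~\ref{k_m}. The plan is to unwind both definitions on a monomial basis element and check they give the same scalar coefficient. First I would note that the two maps $\widehat{f_j}$ attached to $f_j$ in the two constructions literally coincide: in both cases one parametrizes $\widehat\Delta^0\times\Delta^m_\fK\to\Delta^m_\fK$ by $((s,t_0),v)\mapsto t_0 v + s e_j = (1-s)v + s e_j$. Hence for any $\eta\in\Omega_m$ the pull-backs $\widehat{f_j}^\ast\eta\in\widehat\Omega_0\otimes\Omega_m \cong \Omega_m[s]\,ds\oplus\Omega_m[s]$ agree, so we may write $\widehat{f_j}^\ast\eta = ds\wedge\alpha_\eta + \beta_\eta$ with the same $\alpha_\eta,\beta_\eta\in\Omega_m[s]$ in both settings.

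Now the point is simply that $h_j$ and $k_{f_j}$ each extract the ``$ds$-component'' $\alpha_\eta$ and integrate its $s$-dependence over $[0,1]$ (equivalently over $\widehat\Delta^0$), discarding $\beta_\eta$. For $h_j$ this is the definition given in Construction~\ref{h_m}: writing $\alpha_\eta = (1-s)^a s^b\, x_0^{k_0}\cdots x_n^{k_n}\, dx_{c_1}\wedge\cdots\wedge dx_{c_l}$ one sets $h_j(\eta) = \bigl(\int_0^1 (1-s)^a s^b\,ds\bigr)\, x_0^{k_0}\cdots x_n^{k_n}\, dx_{c_1}\wedge\cdots\wedge dx_{c_l}$, extended linearly. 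For $k_{f_j}$, by definition $k_{f_j} = \bigl(\int_{\widehat\Delta^0}\cdot\bigr)\otimes\id$ applied to $\widehat{f_j}^\ast\eta$, and by the axioms \eqref{nneqp}--\eqref{n=p} for integration on $\widehat\Delta^0$, the fiber integral $\int_{\widehat\Delta^0}$ kills the degree-zero-in-$ds$ piece $\beta_\eta$ (it is not a top form on $\widehat\Delta^0$) and sends $ds\wedge\alpha_\eta$ to $\int_{\widehat\Delta^0}(1-s)^a s^b\, dt_0 = \int_0^1 t_0^a s^b\, dt_0$ times the surviving monomial, which equals $\int_0^1(1-s)^a s^b\,ds$ since $t_0 = 1-s$ on $\widehat\Delta^0$. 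So the two coefficients coincide term by term.

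The mild bookkeeping obstacle — really the only thing to be careful about — is the sign and orientation convention in the fiber integration $\int_{\widehat\Delta^0}$ when one decomposes $\widehat{f_j}^\ast\eta$ and pushes forward along the projection $\widehat\Delta^0\times\Delta^m_\fK\to\Delta^m_\fK$: one must check that the identification $\widehat\Omega_0\otimes\Omega_m\cong\Omega_m[s]\,ds\oplus\Omega_m[s]$ used in Construction~\ref{h_m} and the map $\int_{\widehat\Delta^0}\otimes\,\id$ used in Construction~\ref{k_m} agree on the nose, with no stray sign, and that ``$\int_{\widehat\Delta^0} dt_0$ of a polynomial in $s=1-t_0$'' is exactly $\int_0^1(1-s)^a s^b\,ds$ as dictated by \eqref{n=p} with $n=1$, $i=0$. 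Once that normalization is pinned down, the equality $h_j = k_{f_j}$ is immediate on a monomial basis and hence, by linearity, in general. I do not expect any genuine difficulty beyond this.
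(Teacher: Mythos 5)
Your overall strategy (unwind both constructions on a monomial basis element and compare coefficients) is the same as the paper's, but your proof rests on a false identification, and the error lands exactly on the point you yourself single out as ``the only thing to be careful about.'' The two parametrizations attached to $f_j$ do \emph{not} coincide: Construction \ref{h_m} uses $\widehat{f_j}((s,t_0),v)=se_j+t_0v$, so the barycentric coordinates of $v$ are scaled by $t_0=1-s$, whereas Construction \ref{k_m} with $n=0$ uses $((s,t_0),v)\mapsto sv+t_0e_j$, so they are scaled by $s$. The two maps differ by the involution $s\leftrightarrow t_0$ of $\widehat\Delta^0$, and consequently the $ds$-components of the two pullbacks are genuinely different forms: in the paper's computation one carries the factor $(1-s)^{\sum k_i+l-1}$ with coefficient $(-1)^i x_{c_i}$, the other carries $s^{\sum k_i+l-1}$ with coefficient $(-1)^{i-1}x_{c_i}$.

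This is not a harmless relabelling. If you take your identification at face value, then $k_{f_j}(\eta)=\left(\int_{\widehat\Delta^0}\otimes\,\id\right)(ds\wedge\alpha_\eta+\beta_\eta)$ with the \emph{same} $\alpha_\eta=(1-s)^as^b\mu$ as in Construction \ref{h_m}, and formula \eqref{n=p} applied to $\widehat\Delta^0$ with coordinates $(x_0,x_1)=(s,t_0)$ gives $\int_{\widehat\Delta^0}t_0^as^b\,ds=-\frac{a!\,b!}{(a+b+1)!}=-\int_0^1(1-s)^as^b\,ds$, i.e.\ your argument proves $k_{f_j}=-h_j$. The equality actually holds because \emph{two} sign discrepancies cancel: the swap $s\leftrightarrow t_0$ flips the sign of the $ds$-component of the pullback (since $dt_0=-ds$), the orientation convention in \eqref{n=p} contributes the compensating $(-1)$, and the Beta integral $\int_0^1(1-s)^as^b\,ds=\frac{a!\,b!}{(a+b+1)!}$ is symmetric under $s\mapsto 1-s$, so the exchange of exponents is invisible in the final scalar. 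The paper sidesteps the trap by first using $x_j=1-\sum_{i\neq j}x_i$ and $dx_j=-\sum_{i\neq j}dx_i$ to remove $x_j,dx_j$ from $\eta$, and then computing $\widehat{f_j}^\ast\eta$ separately for each construction, checking that both outputs equal $\frac{1}{k_1+\dots+k_m+l}\sum_i(-1)^ix_{c_i}\,x_1^{k_1}\cdots x_m^{k_m}\,dx_{c_1}\wedge\dots\wedge\widehat{dx_{c_i}}\wedge\dots\wedge dx_{c_l}$. To repair your proof you must either redo the computation with the correct $\widehat{f_j}$ of Construction \ref{k_m}, or explicitly exhibit the two cancelling signs; as written, the step ``the two coefficients coincide term by term'' is unjustified and, under your premises, false.
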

\begin{proof}
Take $\eta\in\Omega_m$ and assume without loss of generality $j = 0$ and  
\[\eta = x_0^{k_0}\cdots x_m^{k_m} dx_{c_1}\wedge \dots\wedge dx_{c_l}.\]
The general case will follow by linearity. Call $f$ the map $f_j$.

Since the two identities:
\[x_0 = 1 - \sum\limits_{i = 1}^m x_i, \quad\quad\quad\quad dx_0 = - \sum\limits_{i= 1}^m dx_i,\]
%Since $\eta$ is a polynomial differential form on the affine standard simplex, we can rewrite it using the two identities:
%Is it not necessary to suppose $j = 0$, but otherwise the computation become more annoying.
 hold on the affine standard simplex, we can assume $\eta = x_1^{k_1}\cdots x_m^{k_m} dx_{c_1}\wedge\dots\wedge dx_{c_l},$ with  $0 < c_1 < \dots < c_l.$ Once again the general case follows by linearity.

Following Construction \ref{h_m} we compute $\alpha_\eta$:
\[\alpha_\eta = (1-s)^{\sum\limits_{i = 1}^{m} k_i + l - 1} x_1^{k_1}\cdots x_m^{k_m}\left(\sum\limits_{i = 1}^l (-1)^{i} x_{c_i}ds\wedge dx_{c_1}\wedge\dots\wedge \widehat{dx_{c_i}}\wedge \dots\wedge dx_{c_l}\right),\]
and then $h_0(\eta)$ is:
\[h_0(\eta) = \frac{x_1^{k_1}\cdots x_m^{k_m}}{k_1 + \dots + k_m + l}\left(\sum\limits_{i = 1}^{l} (-1)^{i} x_{c_i}  dx_{c_1}\wedge\dots\wedge \widehat{dx_{c_i}}\wedge \dots\wedge dx_{c_l}\right).\]

Following Construction \ref{k_m} we have
\[\begin{split}\widehat f^\ast(\eta) =&  s^{\sum\limits_{i = 1}^{m} k_i + l - 1} x_1^{k_1}\cdots x_m^{k_m}\left(\sum\limits_{i = 1}^{l} (-1)^{i-1} x_{c_i} ds\wedge dx_{c_1}\wedge \dots\wedge\widehat{dx_{c_i}}\wedge \dots\wedge dx_{c_l}\right)\\
 &\; + s^{\sum\limits_{i = 1}^{m} k_i + l} x_1^{k_1}\cdots x_m^{k_m} dx_{c_1}\wedge \dots\wedge dx_{c_l}.\end{split}\]
Using Equation \eqref{nneqp}, it follows
\[\int\limits_{\widehat\Delta^0}s^{\sum\limits_{i = 1}^{m} k_i + l} = 0,\]
since we are integrating a 0-form the 1-simplex $\widehat\Omega_0$.
And consequently
\[\left(\int_{\widehat\Delta^0}\;\otimes \id\right) \left(s^{\sum\limits_{i = 1}^{m} k_i + l} x_1^{k_1}\cdots x_m^{k_m} dx_{c_1}\wedge\dots\wedge dx_{c_l}\right) = 0.\]
Thus to conclude the proof we have just to compute $k_{f_0}$.
\[\begin{split}k_{f_0}(\eta )&=
\left(\int_{\widehat\Delta^0}\;\otimes \id\right)\left(\widehat f^\ast(\eta)\right)\\ &=\frac{x_1^{k_1}\cdots x_m^{k_m}}{k_1 + \dots + k_m + l}\left(\sum\limits_{i = 1}^{l} (-1)^{i} x_{c_i}  dx_{c_1}\wedge\dots\wedge \widehat{dx_{c_i}}\wedge \dots\wedge dx_{c_l}\right).\end{split}\]
\end{proof}

\begin{lemma}\label{case1}
Fix an integer $n$, a function $f\in  I(n, m)$ and a polynomial differential form $\eta = x_0^{k_0}\cdots x_{f(n)}^0\cdots x_m^{k_m} dx_{c_1}\wedge\dots\wedge dx_{c_q}.$
It is not restrictive to assume that $c_1 < c_2 < \dots < c_q$ and $c_i \neq f(n), \text{ for all } i$. Assume, moreover, that we have \[\left|\{c_1,\dots, c_q\}\cap \text{Im}(f|_{[n-1]})\right| = n - 1,\quad\quad\text{and}\quad\quad  k_{f|_{[n - 1]}}(\eta) =  h_{f|_{[n - 1]}}(\eta).\]
Then it follows that $h_f(\eta) = k_f(\eta).$
\end{lemma}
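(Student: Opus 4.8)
The plan is to reduce the claimed identity $h_f(\eta)=k_f(\eta)$ to the inductive hypothesis $k_{f|_{[n-1]}}(\eta)=h_{f|_{[n-1]}}(\eta)$ by explicitly unwinding both sides via the factorizations $h_f=h_{f(n)}\circ h_{f|_{[n-1]}}$ and (using Lemma~\ref{propk_f}, or rather its combinatorial consequence in this setting) a corresponding decomposition of $k_f$ coming from the iterated pull-back $\widehat f^\ast$. Concretely, both $h_f$ and $k_f$ will be computed by first integrating out the $n$ directions parametrized by $t_0,\dots,t_{n-1}$ (handled by $h_{f|_{[n-1]}}=k_{f|_{[n-1]}}$, which I may invoke by hypothesis) and then integrating out one more direction; the whole statement thus boils down to showing that the single extra integration performed by $h_{f(n)}$ agrees with the single extra integration implicit in passing from $k_{f|_{[n-1]}}$ to $k_f$.

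First I would set up the normal form: since $|\{c_1,\dots,c_q\}\cap\operatorname{Im}(f|_{[n-1]})|=n-1$, all $n-1$ of the ``vertex directions'' $f(0),\dots,f(n-1)$ appear among the $dx_{c_i}$, so after reordering, $\eta$ has the shape (monomial in the remaining $x$'s) times $dx_{f(0)}\wedge\dots\wedge dx_{f(n-1)}\wedge dx_{b_1}\wedge\dots\wedge dx_{b_{q-n+1}}$, where the $b_j$ avoid $\operatorname{Im}(f)$. This is exactly the configuration in which $h_{f|_{[n-1]}}(\eta)$ is nonzero and, by Lemma~\ref{n=0} applied repeatedly (the $n=0$ case) together with the inductive normal-form computation à la Lemma~\ref{n=0}, it equals a scalar (a product of Beta-integrals / reciprocals of sums of the $k_i$'s) times a monomial times $dx_{b_1}\wedge\dots\wedge dx_{b_{q-n+1}}$, with the $s$-variable (here the composite rescaling parameter) tracked explicitly. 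I would carry out this bookkeeping exactly as in the proof of Lemma~\ref{n=0}: expand $\widehat f^\ast$, separate the $ds\wedge(\cdot)$ part $\alpha_\eta$ from the rest $\beta_\eta$, note that the $\beta_\eta$ piece integrates to $0$ by \eqref{nneqp} since it is a form of the wrong degree on the relevant simplex, and read off the coefficient.

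Then I would apply $h_{f(n)}$ to the resulting expression, again using the $\alpha/\beta$ splitting from Construction~\ref{h_m}: the monomial in $x_{f(n)}$ is trivial (exponent $0$) and $dx_{f(n)}$ does not occur, so $\widehat{f_{f(n)}}^\ast$ produces an $s$-form whose $\alpha$-part is a single Beta-integrand, giving one more factor $\int_0^1(1-s)^a s^b\,ds$; multiplying the accumulated scalars yields a closed-form coefficient. On the other side, $k_f(\eta)$ is computed directly from $\widehat f^\ast$ over $\widehat\Delta^n$: here I would use the fact that the affine map $\widehat f\colon\widehat\Delta^n\times\Delta^m_\fK\to\Delta^m_\fK$ factors (up to reparametrization of the simplex coordinates) as the $n=0$ map in the new ``$s$'' direction composed with the $f|_{[n-1]}$-map, so that $\int_{\widehat\Delta^n}$ becomes an iterated integral; the inner integral reproduces the coefficient of $h_{f|_{[n-1]}}=k_{f|_{[n-1]}}$ already computed, and the outer one is precisely the same single Beta-integral as in the $h_{f(n)}$ computation. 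Comparing the two closed-form scalars finishes the proof.

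The main obstacle I expect is purely combinatorial/sign-bookkeeping rather than conceptual: keeping the wedge orderings, the $(-1)^i$ signs from the $\alpha_\eta$ expansion, and the exponents $a,b$ of $(1-s)$ and $s$ consistent between the ``integrate all $n$ directions at once'' picture (the $k_f$ side) and the ``peel off one direction at a time'' picture (the $h_f=h_{f(n)}\circ h_{f|_{[n-1]}}$ side). In particular one must check that the parameter playing the role of $s$ in the final $h_{f(n)}$ step has exponents matching the outermost variable in the iterated integral for $k_f$ over $\widehat\Delta^n$; this requires care because the monomial degree that feeds the Beta-integral is the \emph{total} degree accumulated from both the original exponents $k_i$ and the number of differentials pulled through, and the two computations distribute this total differently at intermediate stages even though the endpoints agree. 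Once the normal form is fixed and the iterated-integral factorization of $\int_{\widehat\Delta^n}$ is written down explicitly, the identity follows by direct comparison of scalars, exactly as in Lemma~\ref{n=0} but with one extra integration layer.
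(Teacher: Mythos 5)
There is a genuine gap, and it starts with a miscount of the hypothesis. The set $\operatorname{Im}(f|_{[n-1]})=\{f(0),\dots,f(n-1)\}$ has $n$ elements, and the assumption $|\{c_1,\dots,c_q\}\cap\operatorname{Im}(f|_{[n-1]})|=n-1$ means that exactly \emph{one} of the vertex directions is missing from the wedge. You instead assert that ``all'' of $f(0),\dots,f(n-1)$ appear among the $dx_{c_i}$ and write the normal form as $dx_{f(0)}\wedge\dots\wedge dx_{f(n-1)}\wedge dx_{b_1}\wedge\dots\wedge dx_{b_{q-n+1}}$, which contains $q+1$ differentials — one too many. The configuration you go on to analyse (all $n$ vertex directions present, followed by a comparison of nonzero Beta-integral coefficients on both sides) is precisely the situation of Lemma~\ref{case2}, not of this lemma. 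Here the correct normal form is $dx_{f(0)}\wedge\dots\wedge dx_{f(n-2)}\wedge dx_{b_1}\wedge\dots\wedge dx_{b_l}$ with $l=q-n+1$ and one vertex direction absent.

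Because of this, the actual content of the lemma is missing from your argument: in this case \emph{both sides are zero}, and the proof consists of two separate vanishing arguments rather than a matching of scalars. For $k_f(\eta)$, the pullback $\widehat f^\ast$ can produce a $dt_i$ only from a $dx_{f(i)}$ occurring in $\eta$; since only $n-1$ of these occur and $dx_{f(n)}$ does not, no term of $\widehat f^\ast(\eta)$ carries the $n+1$ differentials in $(s,t_0,\dots,t_n)$ needed to survive $\int_{\widehat\Delta^n}$, so $k_f(\eta)=0$. For $h_f(\eta)=h_{f(n)}\bigl(k_{f|_{[n-1]}}(\eta)\bigr)$, the intermediate form is of Euler-contracted type $\sum_i(-1)^i x_{b_i}\,dx_{b_1}\wedge\dots\wedge\widehat{dx_{b_i}}\wedge\dots\wedge dx_{b_l}$, and its $\alpha$-part under $\widehat{f_{f(n)}}^\ast$ cancels in pairs (the coefficient of each $ds\wedge dx_{b_1}\wedge\dots\wedge\widehat{dx_{b_j}}\wedge\dots\wedge\widehat{dx_{b_i}}\wedge\dots\wedge dx_{b_l}$ is $x_{b_i}x_{b_j}$ times opposite signs from the two orders of omission), so $h_{f(n)}$ of it is $0$ as well. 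Neither the degree-counting step nor this antisymmetry cancellation appears in your outline, and the generic iterated-Beta-integral comparison you propose would not detect them; as written the proposal does not prove the statement.
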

\begin{proof}

%%%%%%%%%%%%%%%%%%%%%

%In the general case instead of the set $\{0, 1, \dots, n - 1\}$ we consider the set $\{f(0), f(1), \dots, f(n - 1)\}.$

The form $k_f(\eta)$ has negative degree, therefore it vanishes. It is not restrictive to assume that $n - 1\not\in C\cap \text{Im}(f|_{[n-1]})$. Thus $\eta$ has the form: 
\[\eta = x_0^{k_0}\cdots x_{f(n)}^0 \cdots x_m^{k_m} dx_{f(0)}\wedge\dots\wedge dx_{f(n - 2)}\wedge dx_{b_1}\wedge\dots\wedge dx_{b_l},\]
with $0 < b_1 < b_2 < \cdots < b_l< m$, $b_i\neq f(n - 1), f(n)$ and $l = q - n + 1 > 1$. Define the set
\[P = \{(p_0,\dots, p_{n -1})\in \mathbb N^{n } \text{ such that } 0\leq p_i \leq k_{f(i)},\; \forall i\}.\]
Next we compute $\widehat{f|_{[n - 1]}}^\ast(\eta).$ We have
\[\widehat{f|_{[n - 1]}}^\ast(\eta) =\sum\limits_{p\in P}\left( \eta_p\left(\sum\limits_{i = 1}^l x_{b_i}s^{l - 1}(-1)^{i+ n} d_{s,t_0,\dots,t_{n - 2},x_{b_1},\dots,\widehat{x_{b_i}},\dots,x_{b_l}}\right) + \omega_p\right),\]
where $\omega_p$ are forms which vanish under  $\mathlarger{\int\limits_{\widehat\Delta^{n-1}}\otimes \id}$ by a degree argument, and $\eta_p$ is the polynomial
\[\eta_p =\prod\limits_{i = 0}^{n-1}\binom{k_{f(i)}}{p_i}t_i^{p_i}x_i^{k_{f(i)} - p_i}s^{(\sum\limits_{i = 0}^m k_i - \sum\limits_{i = 0}^{n-1} p_i)} \prod\limits_{\tiny{\begin{array}{c}i = 0, \dots, m \\i\not\in f([n - 1])\end{array}}} x_i^{k_i}.\]
Thus we have that
\[h_{f|_{[n - 1]}}(\eta)=k_{f|_{[n - 1]}}(\eta) = \sum\limits_{p \in P} \eta_p\left(\sum\limits_{i = 1}^l x_{b_i}(-1)^{i} dx_{b_1}\wedge\dots\wedge\widehat{dx_{b_i}}\wedge\dots\wedge dx_{b_l}\right).\]
Then $\alpha_{k_{f|_{[n - 1]}}(\eta)}$ is equal to
\[\begin{split}
&\sum\limits_{p \in P} \left(\sum\limits_{i = 1}^l\sum\limits_{j < i} x_{b_i}x_{b_j}(-1)^{i + j} ds\wedge dx_{b_1}\wedge\dots\wedge\widehat{dx_{b_j}}\wedge\dots\wedge\widehat{dx_{b_i}}\wedge\dots\wedge dx_{b_l} \right.\\
&\quad + \left.\sum\limits_{i = 1}^l\sum\limits_{j > i}\right.\left. x_{b_i}x_{b_j}(-1)^{i + j - 1} ds\wedge dx_{b_1}\wedge\dots\wedge\widehat{dx_{b_i}}\wedge\dots\wedge\widehat{dx_{b_j}}\wedge\dots\wedge dx_{b_l}\vphantom{\sum_i^j}\right)\eta_ps^{l - 1}\\
&\qquad= 0.\end{split}\]
So we proved that $h_{n}\circ k_{f|_{[n - 1]}}(\eta) = h_f(\eta)= 0$; and this completes the first part of the proof.
\end{proof}

\begin{lemma}\label{case2}
Fix an integer $n$, a function $f\in  I(n, m)$ and a polynomial differential form $\eta = x_0^{k_0}\cdots x_{f(n)}^0\cdots x_m^{k_m} dx_{c_1}\wedge\dots\wedge dx_{c_q}.$
It is not restrictive to assume that $c_1 < c_2 < \dots < c_q$ and $c_i \neq f(n), \text{ for all } i$. Assume, moreover, that we have:\[\left|\{c_1,\dots, c_q\}\cap \text{Im}(f|_{[n-1]})\right| = n,\quad\quad\text{and}\quad\quad  k_{f|_{[n - 1]}}(\eta) =  h_{f|_{[n - 1]}}(\eta).\]
Then it follows that $h_f(\eta) = k_f(\eta).$
\end{lemma}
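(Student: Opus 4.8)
\textbf{Plan of proof for Lemma~\ref{case2}.}
The plan is to carry out the same kind of explicit computation as in Lemma~\ref{case1}, but now in the genuinely nontrivial case where the differential monomial $dx_{c_1}\wedge\dots\wedge dx_{c_q}$ contains all of $dx_{f(0)},\dots,dx_{f(n-1)}$. Write $\eta = x_0^{k_0}\cdots x_{f(n)}^0\cdots x_m^{k_m}\, dx_{f(0)}\wedge\dots\wedge dx_{f(n-1)}\wedge dx_{b_1}\wedge\dots\wedge dx_{b_l}$ with $l = q-n > 0$ and $b_i \notin f([n])$, since otherwise both sides vanish for degree reasons (and the case $l=0$ is again degree-trivial). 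First I would compute $\widehat{f|_{[n-1]}}^\ast(\eta)$ exactly as in Lemma~\ref{case1}, expanding each $x_{f(i)}$ as $t_i x_i + s(\text{stuff})$ and $dx_{f(i)}$ accordingly; the crucial point is that all $n$ of the differentials $dx_{f(0)},\dots,dx_{f(n-1)}$ are present, so after applying $\int_{\widehat\Delta^{n-1}}\otimes\id$ only the term where each $dx_{f(i)}$ contributes its $t_i\, dx_i$ piece together with one $ds$ survives, and one is left with a single power of $s$ and a clean product of Beta-integrals. This gives, using the inductive hypothesis $k_{f|_{[n-1]}}(\eta) = h_{f|_{[n-1]}}(\eta)$, an explicit monomial expression
\[
h_{f|_{[n-1]}}(\eta) = \sum_{p\in P} \eta_p\left(\sum_{i=1}^l (-1)^{i+n-1} x_{b_i}\, dx_{b_1}\wedge\dots\wedge\widehat{dx_{b_i}}\wedge\dots\wedge dx_{b_l}\right),
\]
with $P$ and $\eta_p$ defined as in Lemma~\ref{case1} (the coefficient also absorbing the Beta-integral $\int_0^1 (1-s)^{\,a}s^{\,b}\,ds$ for the appropriate exponents).

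Next I would apply $h_{f(n)}$ to this expression, following Construction~\ref{h_m}: decompose $\widehat{f_{f(n)}}^\ast\bigl(h_{f|_{[n-1]}}(\eta)\bigr) = ds\wedge\alpha + \beta$, read off $\alpha$, and integrate. Since $\eta$ already has $x_{f(n)}^0$, the only $s$-dependence introduced by $\widehat{f_{f(n)}}$ comes from the overall $(1-s)$-homogenization of the monomial $\eta_p\cdot x_{b_i}\cdot dx_{b_1}\wedge\cdots$, so $\alpha$ is again a sum of monomials times a single power of $s$ and $(1-s)$, and $h_{f(n)}$ replaces these by the corresponding Beta-integrals. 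The outcome should be a completely explicit formula for $h_{f(n)}\circ k_{f|_{[n-1]}}(\eta) = h_f(\eta)$: a sum over $p\in P$ and over a second index set $Q$ coming from expanding $x_{b_i}^{(\cdot)}$, of monomials $x_0^{\cdots}\cdots x_m^{\cdots}\, dx_{b_1}\wedge\dots\wedge\widehat{dx_{b_i}}\wedge\dots\wedge dx_{b_l}$ with explicit rational coefficients that are products of two Beta-type fractions.

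The other half is to compute $k_f(\eta)$ directly from Construction~\ref{k_m}: form $\widehat f^\ast(\eta)$ where $\widehat f((s,t_0,\dots,t_n),v) = sv + \sum_{i=0}^n t_i e_{f(i)}$, so $x_{f(i)}\mapsto t_i + s\,x_i$ for $i\le n-1$, $x_{f(n)}\mapsto t_n + s\,x_{f(n)} = t_n$ (here the vanishing of $x_{f(n)}$ in $\eta$ is used again), $x_j\mapsto s\,x_j$ for $j\notin f([n])$; and the differentials transform accordingly. Then apply $\int_{\widehat\Delta^n}\otimes\id$. Because $\eta$ carries all $n$ differentials $dx_{f(0)},\dots,dx_{f(n-1)}$ and none for $f(n)$, the surviving top-degree-in-$\widehat\Delta^n$ part of $\widehat f^\ast(\eta)$ is forced: each $dx_{f(i)}$, $i\le n-1$, must give $dt_i$ and one of the $dx_{b_j}$ must give $ds$. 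Using Equation~\eqref{n=p} to evaluate the integral over $\widehat\Delta^n$ of the resulting monomials in $s,t_0,\dots,t_n$ yields an expression that must be compared term-by-term with the formula for $h_f(\eta)$ obtained above. The main obstacle, as in Lemma~\ref{case1} but harder here because neither side vanishes, is the bookkeeping: matching the two multi-indexed sums requires identifying the right change of summation variables and verifying a nontrivial identity between the product of two iterated Beta-integrals (on the $h$ side) and a single $(n+1)$-fold Dirichlet-type integral over $\widehat\Delta^n$ (on the $k$ side). Concretely this reduces to an elementary but fiddly identity of the shape
\[
\int_0^1(1-s)^{a}s^{b}\,ds\cdot\frac{(\text{factorials from }k_{f|_{[n-1]}})}{(\cdots)} \;=\; \int_{\widehat\Delta^n} s^{a'} t_0^{\,p_0}\cdots t_n^{\,p_n}\, dt_1\wedge\dots
\]
after summing over the auxiliary indices, and I would verify it by induction on $n$ or by directly invoking the product formula for the Dirichlet integral \eqref{n=p}. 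Once this combinatorial identity is in hand, the two monomial expansions agree coefficient by coefficient and the lemma follows.
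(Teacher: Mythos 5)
Your overall strategy coincides with the paper's: compute $k_{f|_{[n-1]}}(\eta)=h_{f|_{[n-1]}}(\eta)$ explicitly, apply $h_{f(n)}$, compute $k_f(\eta)$ directly from Construction~\ref{k_m}, and match coefficients via Beta/Dirichlet integral identities of exactly the shape you describe. However, there is a genuine error at the central step: your claimed intermediate formula
\[
h_{f|_{[n-1]}}(\eta) = \sum_{p\in P} \eta_p\Bigl(\sum_{i=1}^l (-1)^{i+n-1} x_{b_i}\, dx_{b_1}\wedge\dots\wedge\widehat{dx_{b_i}}\wedge\dots\wedge dx_{b_l}\Bigr)
\]
is the shape of the answer in the situation of Lemma~\ref{case1}, not of Lemma~\ref{case2}. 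Under $\widehat{f|_{[n-1]}}$ one has $x_{f(i)}\mapsto s\,x_{f(i)}+t_i$, hence $dx_{f(i)}\mapsto s\,dx_{f(i)}+x_{f(i)}\,ds+dt_i$ (there is no ``$t_i\,dx_i$ piece''), and $\int_{\widehat\Delta^{n-1}}\otimes\id$ retains the components of degree exactly $n$ in $ds,dt_0,\dots,dt_{n-1}$. Since here \emph{all} $n$ differentials $dx_{f(0)},\dots,dx_{f(n-1)}$ occur in $\eta$, the surviving terms fall into three families: (a) every $dx_{f(i)}$ gives $dt_i$ and every $dx_{b_j}$ gives $s\,dx_{b_j}$, producing a term proportional to $dx_{b_1}\wedge\dots\wedge dx_{b_l}$ with no $x_{b_i}$ prefactor; (b) one $dx_{f(i)}$ supplies $x_{f(i)}\,ds$; (c) one $dx_{f(j)}$ stays as $s\,dx_{f(j)}$ while one $dx_{b_i}$ supplies $x_{b_i}\,ds$. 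Your formula records only family (c) and drops (a) and (b). This is fatal rather than cosmetic: the paper splits the result as $\gamma_1+\gamma_2$, where $\gamma_1$ is precisely family (a), and proves $h_{f(n)}(\gamma_1)=k_f(\eta)$ while $h_{f(n)}(\gamma_2)=0$ (the vanishing of the (b)--(c) part reducing to the antisymmetry cancellation of Lemma~\ref{case1}). If you feed only family (c) into $h_{f(n)}$, the same cancellation that killed everything in Lemma~\ref{case1} kills it here too, and your plan would output $h_f(\eta)=0$, contradicting the explicit nonzero value of $k_f(\eta)$ that you correctly set up in the second half. The fix is to keep all three families after the first integration and isolate family (a) as the sole contributor; with that correction your concluding Beta-integral identity is exactly the one the paper verifies, namely $a_p/\bigl(\sum_i k_i-\sum_i p_i+l\bigr)$ equals the coefficient produced by $\int_{\widehat\Delta^n}$ in $k_f(\eta)$.
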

\begin{proof}
We can write $\eta$ as
\[\eta = x_0^{k_0}\cdots x_{f(n)}^0 \cdots x_m^{k_m} dx_{f(0)}\wedge\dots\wedge dx_{f(n - 1)}\wedge dx_{b_1}\wedge \dots\wedge dx_{b_l},\] 
with $0 < b_1 < b_2 < \dots < b_l< m$, $b_i\neq f(n)$ and $l = q - n \geq 1$. Consider, as in Lemma \ref{case1}, the set: 
\[P = \{(p_0, \dots, p_{n - 1})\in \mathbb N^n \text{ such that } 0\leq p_i \leq k_{f(i)},\; \forall i\}.\]
In order to compute $k_f(\eta)$, observe that:
\[\widehat{f}^\ast (\eta) = \sum\limits_{p\in P}\epsilon_p\theta_pd(sx_{f(0)} + t_0)\wedge\dots\wedge d(sx_{f(n - 1)} + t_{n - 1})\wedge d(sx_{b_1})\wedge\dots\wedge d(sx_{b_l}).\]
Where $\theta_p$ and $\epsilon_p$ are defined as: 
\[\epsilon_p = \prod\limits_{i = 0}^{n-1}\binom{k_{f(i)}}{p_i}x_{f(i)}^{k_{f(i)} - p_i}\prod\limits_{\tiny{\begin{array}{c}i = 0, \dots, m\\i\not\in f([n-1])\end{array}}} x_i^{k_i};\quad\quad\quad\quad \theta_p = s^{\sum\limits_{i = 0}^m k_i - \sum\limits_{i = 0}^{n-1} p_i}\prod\limits_{i = 0}^{n-1}t_i^{p_i}.\]
Then $k_f(\eta)$ is equal to: 
{\small\[\begin{split}
&\left(\int\limits_{\widehat\Delta^n} \otimes \id\right)
%%%%%%%%%%%%%%%%%%%%%%%%%%%%%%%%%%%%sei arrivato qua%%%%%%%%%%%%%%%%%%%%%%%%%
\left(\sum\limits_{p\in P}\left(\epsilon_p \theta_p\left(\sum\limits_{i = 1}^l x_{b_i}s^{l - 1}d_{t_0, \dots, t_{n - 1}, x_{b_1},\dots, x_{b_{i - 1}}, s, x_{b_{i + 1}},\dots, x_{b_l}}\right) + \omega_p\right)\right)\\
&\quad= \sum\limits_{p\in P}\frac{\left(\sum\limits_{i = 0}^m k_i - \sum\limits_{i = 0}^{n-1} p_i + l - 1\right)!\prod\limits_{i=0}^{n - 1} (p_i!)}{\left(\sum\limits_{i = 0}^m k_i + l + n\right)!}\epsilon_p\left(\sum\limits_{i = 1}^l(-1)^i x_{b_i}dx_{b_1}\wedge\dots\wedge\widehat{dx_{b_i}}\wedge\dots\wedge dx_{b_l}\right).\end{split}\]}
The forms $\omega_p$ vanish under  $\mathlarger{\int\limits_{\widehat\Delta^{n-1}}\otimes \id}$ by a degree argument. Moreover we have
\[\int\limits_{\widehat\Delta^n} \theta_p s^{l-1} dt_0\wedge\dots\wedge dt_n = \frac{\left(\sum\limits_{i = 0}^m k_i - \sum\limits_{i = 0}^{n-1} p_i + l - 1\right)!\prod\limits_{i=0}^{n - 1} (p_i!)}{\left(\sum\limits_{i = 0}^m k_i + l + n\right)!}.\]
 We can now compute $k_{f|_{[n - 1]}}(\eta)$. Recall that 
 \[\eta= x_0^{k_0}\dots x_{f(n)}^0 \dots x_m^{k_m} dx_{f(0)}\wedge\dots\wedge dx_{f(n - 1)}\wedge dx_{b_1}\wedge \dots\wedge dx_{b_l}.\]
 Thus we have:
 
%$\widehat{f}^\ast (\eta)$ is the image of $\widehat{f|_{[n - 1]}}^\ast (\eta)$ under the canonical embedding and then:
\[\begin{split}&k_{f|_{[n - 1]}}(\eta)\\ 
&\;= \left(\int_{C^{n - 1}} \otimes \id\right)\left(\sum\limits_{p\in P}\epsilon_p \theta_p s^l\left(\vphantom{\int_i^n}dt_0\wedge\dots \wedge dt_{n - 1}\wedge dx_{b_1}\wedge\dots\wedge dx_{b_l}\right.\right. \\
&\quad\;\left.+ \sum\limits_{i = 0}^{n - 1}x_{f(i)}dt_0\wedge\dots\wedge dt_{i - 1}\wedge ds\wedge dt_{i + 1}\wedge \dots\wedge dt_{n - 1} \wedge dx_{b_1}\wedge \dots\wedge dx_{b_l}\right.\\
&\quad\;\left.+ \sum\limits_{i = 1}^l \sum\limits_{j = 0}^{n - 1}x_{b_i} d_{t_0,\dots,x_{f(j)},\dots,t_{n - 1},x_{b_1},\dots,x_{b_{i - 1}},s,x_{b_{i + 1}},\dots,x_{b_l}}\left.\vphantom{\int_i^n}\right) + \omega_p\right)\\
&\;=\sum\limits_{p\in P}\frac{\left(\sum\limits_{i = 0}^m k_i - \sum\limits_{i = 0}^{n-1} p_i + l\right)!\prod\limits_{i=0}^{n - 1} (p_i!)}{\left(\sum\limits_{i = 0}^m k_i + l + n\right)!} \epsilon_p \left(\vphantom{\sum_{i = 1}^l}dx_{b_1}\wedge\dots\wedge dx_{b_l}\right.\\
&\quad\;- \sum\limits_{i = 0}^{n - 1}x_{f(i)} dx_{b_1}\wedge\dots\wedge dx_{b_l}\\
&\quad\;\left.+ \sum_{i = 1}^l \sum\limits_{j = 0}^{n - 1}(-1)^{i - 1}x_{b_i} dx_{f(j)}\wedge dx_{b_1}\wedge\dots\wedge \widehat {dx_{b_i}}\wedge\dots\wedge dx_{b_l}\right).\end{split}\]
For the sake of readability call
\[\begin{split}\gamma_1 &= \sum\limits_{p\in P}\frac{\left(\sum\limits_{i = 0}^m k_i - \sum\limits_{i = 0}^{n-1} p_i + l\right)!\prod\limits_{i=0}^{n - 1} (p_i!)}{\left(\sum\limits_{i = 0}^m k_i + l + n\right)!} \epsilon_p dx_{b_1}\wedge\dots\wedge dx_{b_l};\\
\gamma_2 &= \sum\limits_{p\in P}\frac{\left(\sum\limits_{i = 0}^m k_i - \sum\limits_{i = 0}^{n-1} p_i + l\right)!\prod\limits_{i=0}^{n - 1} (p_i)!}{\left(\sum\limits_{i = 0}^m k_i + l + n\right)!} \epsilon_p\left(\sum\limits_{i = 0}^{n - 1} - x_{f(i)} dx_{b_1}\wedge \dots\wedge dx_{b_l} \right.\\
&\qquad\left.+ \sum\limits_{i = 1}^l \sum\limits_{j = 0}^{n - 1}(-1)^{i - 1}x_{b_i} dx_{f(j)}\wedge dx_{b_1}\wedge \dots\wedge \widehat{dx_{b_i}}\wedge\dots\wedge dx_{b_l}\right).\end{split}\]
Next we show that $h_{f(n)} (\gamma_1) = k_f(\eta)$ and $h_{f(n)} (\gamma_2) = 0$, which concludes the proof. The map $h_{f(n)}$ is the one described in Construction \ref{h_m}. The pullback of $\gamma_1$ under the map $f_{n}\colon [0]\to[m],\; 0\mapsto f(n)$ is:

\[f_n^\ast(\gamma_1) = \sum\limits_{p\in P} a_p (1 - s)^{\sum\limits_{i = 0}^m k_i - \sum\limits_{i = 0}^{n-1} p_i} \epsilon_p d((1 - s) x_{b_1})\wedge \dots\wedge d((1-s)x_{b_l}).\]
Where $a_p$ is defined for $p\in P$ as
\[a_p = \frac{\left(\sum\limits_{i = 0}^m k_i - \sum\limits_{i = 0}^{n-1} p_i + l\right)!\prod\limits_{i = 0}^{n - 1} (p_i!)}{\left(\sum\limits_{i = 0}^m k_i + l + n\right)!}.\]
Then we have:
\[\alpha_{\gamma_1} = \sum\limits_{p\in P} a_p \epsilon_p (1-s)^{\sum\limits_{i = 0}^m k_i - \sum\limits_{i = 0}^{n-1} p_i + l - 1} \left(\sum\limits_{i = 1}^l -(-1)^{i - 1}x_{b_i}  d_{s,x_{b_1},\dots,\widehat{x_{b_i}},\dots,x_{b_l}}\right).\]
By integration we get:
\[h_{f(n)}(\gamma_1) = \sum\limits_{p \in P} \tfrac{a_p}{\sum\limits_{i = 0}^m k_i - \sum\limits_{i = 0}^{n-1} p_i + l} \epsilon_p \left(\sum\limits_{i = 1}^l (-1)^{i} x_{b_i} dx_{b_1}\wedge\dots\wedge\widehat{dx_{b_i}}\wedge\dots \wedge dx_{b_l}\right).\]

To conclude the proof we need to show that $h_{f(n)}(\gamma_2) = 0$, but this follows from Lemma \ref{case1} since $n\not\in\{f(1), \dots, f(n - 1), b_1, \dots, b_l\}.$
\end{proof}

\begin{remark}A remarkable consequence of Theorem \ref{manettidupont} is that we have a simplicial contraction
\[\contr{\sC_\bullet}{\Omega_\bullet}{i_\bullet}{k_\bullet}{\pi_\bullet}\]

Getzler in \cite{getzler} showed that  $k_m^2 = 0$ and that $\pi_m k_m = 0$ (his proof of this latter fact works replacing $k_m$ with any family of functions satisfying Point $(2)$ of Theorem \ref{manettidupont}). This means that this is a simplicial contraction in the sense of \cite{eml} and of \cite{ls}.
\end{remark}

\end{document}